\setlist[enumerate]{leftmargin=.5in}
\setlist[itemize]{leftmargin=.5in}
\crefname{hypothesis}{Hypothesis}{Hypotheses}
\crefname{assumption}{Assumption}{Assumptions}
\newcommand{\R}{\mathbb{R}}
\newcommand{\N}{\mathbb{N}}
\newcommand{\E}{\mathbb{E}}
\title{Early Stopping of Untrained Convolutional Neural Networks\thanks{Submitted to the editors DATE.
\funding{The work of TJ was partly supported by the Deutsche Forschungsgemeinschaft under Germany’s Excellence Strategy - GZ 2047/1, Projekt-ID390685813 (Hausdorff Center for Mathematics) and EXC-2046/1, project ID 390685689 (the Berlin Mathematics Research Center MATH+), and that of BJ is supported by Hong Kong RGC General Research Fund (Project 14306423) and a start-up
fund from The Chinese University of Hong Kong.}}}
\author{Tim Jahn\thanks{Institut f\"ur Mathematik, TU Berlin, Germany 
  (\email{jahn@tu-berlin.de}, \email{timjahn3@gmail.com}).}
\and Bangti Jin\thanks{Department of Mathematics, The Chinese University of Hong Kong, Shatin, N.T., Hong Kong, P.R. China
  (\email{b.jin@cuhk.edu.hk}, \email{bangti.jin@gmail.com}).}
}
\begin{document}

\maketitle

\begin{abstract}
In recent years, new regularization methods based on (deep) neural networks have shown very promising empirical performance for the numerical solution of ill-posed problems, e.g., in medical imaging and imaging science. Due to the nonlinearity of neural networks, these methods often lack satisfactory theoretical justification. In this work, we rigorously discuss the convergence of a successful unsupervised approach that utilizes untrained convolutional neural networks to represent solutions to linear ill-posed problems. Untrained neural networks are particularly appealing for many applications because they do not require paired training data. The regularization property of the approach relies solely on the architecture of the neural network instead. Due to the vast over-parameterization of the employed neural network, suitable early stopping is essential for the success of the method. We establish that the classical discrepancy principle is an adequate method for early stopping of two-layer untrained convolutional neural networks learned by gradient descent, and furthermore, it yields an approximation with minimax optimal convergence rates. Numerical results are also presented to illustrate the theoretical findings.
\end{abstract}

\begin{keywords}
convolutional untrained network, convergence rate, inverse problems, discrepancy principle
\end{keywords}

\begin{AMS}
65F22
\end{AMS}

\section{Introduction}\label{s1}
In this work we consider the numerical solution of linear inverse problems
\begin{equation}\label{eqn:lin}
y =Ax,
\end{equation}
with $A\in\R^{m\times n}$ being an ill-conditioned matrix, $x\in\R^n$ the sought solution and $y\in\R^m$ the exact data. In practice, the exact data $y$ is unknown, and we have access only to a noisy measurement
\begin{equation*}
    y^\epsilon=y + \epsilon z^\epsilon,
\end{equation*}
where $\epsilon>0$ denotes the noise level and {\color{black}$z^\epsilon\in\R^m$} the noise direction with $\|z^\epsilon\|=1$ (with $\|\cdot\|$ denoting the Euclidean norm of a vector and spectral norm of a matrix). Typical, the matrix $A$ is high dimensional, and it arises in various practical applications, e.g., image deblurring and denoising, compressed sensing and computed tomography. Due to the inherent ill-conditioning of the matrix $A$ and the presence of noise in the measurement $y^\epsilon$, direct solution methods like LU factorization 
will only yield poor approximations to the exact solution $x^\dag$, even for a small noise level $\epsilon$, since the inevitable data noise is amplified in an uncontrolled manner. Because of this observation, successful solution algorithms typically include regularization either implicitly or explicitly, which are continuous approximations of the (pseudo)-inverse of $A$ \cite{EnglHankeNeubauer:1996,ItoJin:2015}. An approximation is then chosen dependent of the measured data $y^\epsilon$ and the noise level $\epsilon$. Classical regularization schemes, e.g., Tikhonov regularization, Landweber iteration or spectral cut-off are part of the wide class of linear filter-based regularization methods.

In recent years many new regularization methods have been developed based on the use of (deep) neural networks. While these methods often yield remarkable empirical results in a plenitude of challenging applications, they can hardly be assessed from a theoretical view point and often there  are no rigorous convergence guarantees or error estimates at hand (see \cite{MukherjeeHauptmann:2023,ScarlettEldar:2023,HabringHoller:2023} for recent surveys on the theoretical analysis of neural solvers for inverse problems). This is often attributed to the nonlinearity of neural networks and the resulting nonconvexity of the loss function. One prominent example for a new regularization method based on neural networks is deep image prior \cite{Ulyanov:2018}. In this method, a convolutional neural network (CNN) $G$ is fitted to the non-linear least squares objective
 \begin{equation}\label{e00}
 \mathcal{L}(C):=\tfrac{1}{2}\|y^\epsilon -AG(C)\|^2,
 \end{equation}
where $C$ denotes the weights of the generator $G$ (often taken to be U-net \cite{Ronneberger:2015unet}). This method is very attractive from the view point of practical applications in that no paired training data is needed to learn the neural network parameters, whereas in many applications paired training data is expensive to collect, if not infeasible at all. The regularization effect is purely based on a specifically chosen architecture for the generator $G$ (a.k.a. regularization by architecture \cite{Dittmer:2020}). Deep image prior was first proposed for canonical tasks in imaging sciences, e.g., image denoising, super-resolution and inpainting. More recently, the method and its variants have found interest in medical imaging, e.g., magnetic resonance imaging \cite{YooJinUnser:2021}, computed tomography \cite{baguer2020diptv,barbano2021education} and positron emission tomography \cite{GongQiLi:2018}.

{\color{black}In a standard CNN architecture, given the input tensor $B_0$, the channels in the layers are given by
\begin{equation*}
    {B}_{i+1} = \mathrm{cn}(\mathrm{ReLU}(U_iB_iC_i)),\quad i =0,\ldots, d-1,
\end{equation*}
and the output of the $d$th layer is formed as ${x}=B_dC_{d+1}$. The matrices $C_i\in \mathbb{R}^{k\times k}$ and $C_{d+1}\in\mathbb{R}^{k\times k_o}$ contain the weights of the CNN. $k$ is the number of channels and determines the number of parameters of the CNN, and $\mathrm{cn}(\cdot)$ performs normalization on all channels individually. The rectified linear unit (ReLU), defined as $\mathrm{ReLU}(t)=\max(t,0)$, acts as the nonlinear activation function, and is applied to the matrix $U_iB_iC_i$ entrywise. The matrix $U_i\in\mathbb{R}^{n_{i+1}\times n_i}$ is a circulant matrix which implements a convolution filter with a fixed kernel $u_i$, e.g., triangular kernel, and corresponds to down- or up-sampling. The simplest model is a CNN with only one hidden layer and one output channel. Then the generator becomes 
\begin{equation*}
    G(C_0)=\mathrm{ReLU}(UB_0C_0)c_1,
\end{equation*}
with the circulant matrix  $U\in\mathbb{R}^{n\times n}$ for the filter ${u}$. Note that scaling the $i$th column of $C_0$ with a nonnegative factor is equivalent to rescaling the $i$th entry of the weight $c_1$. So one may fix $c_1=v:=(1,\ldots,1,-1,\ldots,-1)^\top/\sqrt{k}$ (with the first $\lfloor k/2\rfloor$ entries being 1 and the remaining entries being $-1$). The pattern of $v$ can be arbitrarily shuffled, and the choice is different from the common choice of using random Gaussian / uniform noise in deep image prior. Next, since the matrix $B_0\in\mathbb{R}^{n\times n}$ is Gaussian, with probability one, it has full rank and spans the whole $\mathbb{R}^n$, and then optimizing over $C_0\in\mathbb{R}^{n\times k}$ in $C=B_0C_0$ is equivalent to optimizing over $C\in\mathbb{R}^{n\times k}$.}
Thus, a two-layer CNN may be recast
\begin{equation}\label{e0}
G(C)={\rm ReLU}(UC)v,
\end{equation}

In this work we build upon a previous analysis from Heckel and Soltanolkotabi \cite{HeckelSoltanolkotabi:2020denoising} and analyze the dynamics of gradient descent on convolutional generators $G(C)$ in \eqref{e0}. Specifically, we learn the weights $C$ by applying the standard gradient descent with a constant step size $\eta>0$ to {\color{black}the} non-linear least squares objective \eqref{e00}, starting from a random centred Gaussian realisation  $C_0$ with variance $\omega^2$. Due to the ill-conditioning of $A$, this iteration must not be proceeded ad infinitum. It has to be terminated appropriately, a technique commonly known under the name early stopping in the inverse problems and machine learning communities
\cite{EnglHankeNeubauer:1996}. By now it is widely accepted that early stopping is one of the key issues that {\color{black}has} to be addressed for over-parameterized models \cite{Barbano:2023subspace,Wang2021stopping}. To this end, we use the discrepancy principle \cite{Morozov:1966}, which follows the idea that the iteration should be stopped whenever the residual norm is approximately of the same size as the data error norm $\epsilon$. Let $C_\tau^\epsilon$ denote the iterate at the $\tau$th iteration for noisy data $y^\epsilon$ and $L>1$ be a fudge parameter. Then the stopping index $\tau_{\rm dp}^\epsilon$ by the discrepancy principle is defined as
\begin{equation}\label{eqn:dp}
\tau_{\rm dp}^\epsilon=\tau_{\rm dp}^\epsilon(y^\epsilon):=\min\left\{\tau\ge 0~:~\|AG(C_\tau^\epsilon) - y^\epsilon\|\le L \epsilon\right\}.
\end{equation}
Formally this principle is applicable whenever an estimate of the noise level $\epsilon$ is known, which often can be estimated from the data.

In this work, we aim at providing a thorough theoretical justification of the discrepancy principle \eqref{eqn:dp}. Note that the development of reliable stopping rules has been widely accepted as one of the most outstanding practical challenges associated with deep image prior type techniques \cite{Wang2021stopping,Barbano:2023subspace}.  In Theorem \ref{t0} we provide a convergence result: when the neural network is properly (randomly) initialized and its width is sufficiently large, the stopping rule \eqref{eqn:dp} is well defined, and under the canonical H\"{o}lder type source condition on the minimum-norm solution $x^\dag$, the obtained approximation $G(C_{\tau_{\rm dp}^\epsilon}^\epsilon)$ can achieve the optimal accuracy with a high probability. To the best of our knowledge, this is one of the first theoretical justifications for the use of the technique in the context of ill-conditioned linear inverse problems.

The analysis of the stopping rule \eqref{eqn:dp} follows closely the by now established paradigm of over-parameterized neural networks: for sufficiently wide neural networks, the nonlinear model stays close to a linear one at the initialisation, so is the dynamics of gradient descent. This power of over-parameterization idea has been widely employed in the machine learning community (see, \cite{JacotHongler:2018,AroraDuHuLi:2019,Lee:2019,ChizatBach:2019,CaoGu:2019} for some early works). {\color{black}In essence, the approach relies on concentrations inequalities for random variables, cf. Proposition \ref{s2:p1} and its proof for the details.} The analysis in this work builds on several existing analyses in the context of denoising and compressed sensing \cite{HeckelSoltanolkotabi:2020CS,HeckelSoltanolkotabi:2020denoising}, but strengthens the argument {\color{black}to prove that the iteration and the residual for the nonlinear and the linearized models are uniformly close for a suitably given maximum number $T_\epsilon$ of iterations, instead of for all iterations as the iteration number tends to infinity, cf. Theorem \ref{t0} and Remark \ref{rmk:res-comparison} below for the precise statement and comparison}. This improvement allows us to derive error bounds that are independent of the smallest singular values of the matrices $A$ and $\Sigma(U)$ (cf. \eqref{eqn:Sigma} below for the definition). Together with techniques for analyzing the classical Landweber method, it enables us to establish the desired convergence rate. Meanwhile, the mathematical analysis of neural networks as a regularizer is actively studied from various angles (see e.g., \cite{LiSchwab:2020,Arndt:2022,BianchiLi:2023, Buskulic:2023convergence} for an incomplete list). The difference of our work from these existing works \cite{LiSchwab:2020,Arndt:2022,BianchiLi:2023} lies in our focus on early stopping and regularizing property, especially in the lens of iterative regularization. {\color{black}Note that the issue of early stopping has been investigated empirically in several recent works \cite{ChengSheldon:2019,Jo:2020,Wang2021stopping,Barbano:2023subspace,Nittscher:2024}. For example, Cheng et al \cite{ChengSheldon:2019} established an asymptotic equivalence of deep image prior to a stationary Gaussian process prior in the limit as the number of channels in each layer of the neural network goes to infinity, and based on the connection, the authors also proposed using stochastic gradient Langevin dynamics for posterior inference to alleviate the need of early stopping. In this work, we analyze the convergence behavior of the classical discrepancy principle \cite{Morozov:1966}. }

The rest of the paper is organized as follows. In Section \ref{s01}, we describe the main result, explain the main idea of the proof, and put the result into the context of iterative regularization. In Section \ref{s2}, we derive several preliminary estimates which are crucial to the convergence analysis, and in Section \ref{s3}, we give the complete proof of Theorem \ref{t0}. Finally, some numerical results are presented in Section \ref{s4} to complement the theoretical analysis.

 \section{Main result and discussions}\label{s01}

Now we present the main theoretical result of the work, which gives the convergence rate of the approximation given by the convolutional generator $G(C_
{\tau_{\rm dp}^\epsilon}^\epsilon)$, learned by the standard gradient descent in conjunction with the discrepancy principle \eqref{eqn:dp}.
In a customary way, we define the canonical H\"older source condition as
\begin{equation}\label{eqn:source}
\mathcal{X}_{\nu,\rho}:=\big\{x ~:~ x=(A^\top A)^\frac{\nu}{2}v,~\|v\|\le \rho\big\}.
\end{equation}
This condition is classic and needed in order to derive explicit convergence rates of the constructed approximation \cite[Section 4.2]{EnglHankeNeubauer:1996}. Due to the smoothing property of the matrix $A$, the condition represents a  certain regularity constraint of the reference solution. The method will provide an approximation to the minimum norm solution $x^\dag$, defined by 
\begin{equation*}
    x^\dag = \arg\min_{x\in\mathbb{R}^n:Ax = y} \|x\|.
\end{equation*}
 
The following theorem is the main result of the work. In the statement, for simplicity, we have assumed that $\eta=1$ and $\|A\|,\|U\|,\|\Sigma(U)\| \le 1$. Here $\Sigma(U):=\E\left[\mathcal{J}(C)\mathcal{J} (C)^\top\right]\in \R^{n\times n}$, {\color{black}where $\mathcal{J}(C)$ denotes the Jacobian of the convolutional generator $G(C)$ with respect to its parameters $C$}. The notation $\mathbb{E}$ denotes taking expectation with respect to the underlying distribution for $C$, i.e., i.i.d. centered Gaussian with variance {\color{black}$\omega^2$.} Note that the dependence on the width $k$ of the neural network generator $G$, the input $v$ and the iterates $C_\tau$ is suppressed from the notation for better readability, and that the dimensions $n$ and $m$ are fixed throughout. 

\begin{theorem}\label{t0}
Let $\Sigma(U)$ and $A^\top A\in\R^{n\times n}$ have a common eigenbasis $(w_i)_{i=1}^n$ with corresponding polynomially decaying eigenvalues 
$\sigma_i^2$ and $\alpha_j^2$: {\color{black}$\frac{\sigma_i^2}{i^{-p}} \in [b_\Sigma,B_\Sigma]$  and $\frac{\alpha_j^2}{j^{-q}} \in [b_A,B_A]$ for some constants $B_A\ge b_A>0$ and $B_\Sigma \ge b_\Sigma>0$.} Let the minimum norm solution $x^\dag$ to problem \eqref{eqn:lin} {\color{black}belong to $ \mathcal{X}_{\nu,\rho}$, let the corresponding noisy data $y^\epsilon$ satisfy $\|y^\epsilon\|\ge \epsilon$, and choose the maximum number $T_\epsilon$ of iterations such that}
\begin{equation*}
    T_\epsilon\ge \max\left(\frac{q(1+\nu)B_A^\frac{q+p}{q}}{2e(p+q)b_A b_\Sigma}\left(\frac{2\rho}{(L-1)\epsilon}\right)^{\frac{2(p+q)}{q(1+\nu)}},\frac{4\|y^\epsilon\|}{(L-1)\epsilon}\right).
\end{equation*}
Then, for $0<\delta_\epsilon <\frac{1}{4}$ and the entries of the initial weight matrix $C_0\in\R^{n\times k}$ chosen as independent centred Gaussian with variance $\omega^2 = \frac{(L-1)\epsilon}{8\sqrt{8n\log\left(\frac{2n}{\delta_\epsilon}\right)}}$ as well as
\begin{equation*}
k_\epsilon\ge \frac{2^{35} \|y^\epsilon\|^8 n\log\left(\frac{2n}{\delta_\epsilon}\right) T_\epsilon^{13}}{(L-1)^8\epsilon^8} ,
\end{equation*}
the following error estimate holds
\begin{equation*}
\sup_{x^\dag\in\mathcal{X}_{\nu,\rho}}\mathbb{P}\left(\|G(C^\varepsilon_{\tau^\varepsilon_{\rm dp}})-x^\dag\|\le \tilde{L}(\epsilon+ \epsilon^\frac{\nu}{\nu+1}\rho^\frac{1}{\nu+1} )\right)\ge 1-4\delta_\epsilon,
\end{equation*}
 for all $\epsilon \le \left(\frac{L-1}{16}\right)^\frac{1}{3}$, where the constant $\tilde{L}:= \left(2L\right)^\frac{\nu}{\nu+1} + \left(\frac{q(1+\nu) B_A^\frac{2q+p}{q}B_\Sigma}{2e(q+p)b_A^\frac{2q+p}{q}b_\Sigma}\right)^\frac{q}{2(q+p)}\left(\frac{4\rho}{L-1}\right)^\frac{1}{1+\nu} +L-1$ depends only on {\color{black}the parameters $B_A$, $b_A$, $B_\Sigma$, $b_\Sigma$, $p$, $q$, $\nu$ and $L$.}
 \end{theorem}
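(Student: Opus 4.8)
The plan is to exploit the over-parameterization paradigm described in the introduction and reduce the nonlinear gradient descent dynamics to a classical spectral (Landweber-type) iteration, for which the discrepancy principle is well understood. Since $\Sigma(U)$ and $A^\top A$ are simultaneously diagonalizable in the basis $(w_i)$, I would first diagonalize the entire problem: writing $Aw_j=\alpha_j\tilde u_j$ with $\tilde u_j$ orthonormal (so that $A^\top\tilde u_j=\alpha_j w_j$), both the linearized residual and the linearized reconstruction decouple into scalar recursions governed by the products $\alpha_j^2\sigma_j^2$. Concretely, for the linearized model $AG(C_0)+A\mathcal J(C_0)(C-C_0)$, gradient descent with $\eta=1$ drives the residual as $r_\tau=(I-A\Sigma(U)A^\top)^\tau r_0$ up to the concentration error furnished by Proposition~\ref{s2:p1}, so that componentwise $r_\tau^{(j)}=(1-\alpha_j^2\sigma_j^2)^\tau r_0^{(j)}$, while the reconstruction coefficient is $\langle G(C_\tau),w_j\rangle\approx\alpha_j^{-1}\bigl(1-(1-\alpha_j^2\sigma_j^2)^\tau\bigr)r_0^{(j)}$. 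This is exactly a filter-based regularization with filter factors $1-(1-\alpha_j^2\sigma_j^2)^\tau$, and the small initialization variance $\omega^2$ is chosen so that the initial term $G(C_0)$ and the trajectory length are negligible on the scale $(L-1)\epsilon$.

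Second I would run the standard bias--variance analysis on this linearized iteration. Decomposing the error into the bias $(1-\alpha_j^2\sigma_j^2)^\tau\langle x^\dag,w_j\rangle$ and the noise part $\alpha_j^{-1}\bigl(1-(1-\alpha_j^2\sigma_j^2)^\tau\bigr)\epsilon z^{(j)}$, the source condition $x^\dag\in\mathcal X_{\nu,\rho}$ gives $\langle x^\dag,w_j\rangle=\alpha_j^\nu\langle v,w_j\rangle$ and hence a bias bounded by $\rho\max_j(1-\alpha_j^2\sigma_j^2)^\tau\alpha_j^\nu$, while the variance is summed in the same spirit. The polynomial bounds $\sigma_i^2\asymp i^{-p}$, $\alpha_j^2\asymp j^{-q}$ then convert both expressions into explicit functions of $\tau$; these two monotone quantities, one decreasing and one increasing in $\tau$, are the origin of the two exponents in the rate and of the explicit constants $B_A,b_A,B_\Sigma,b_\Sigma$ appearing in $T_\epsilon$ and $\tilde L$.

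Third I would carry out the discrepancy-principle argument on the linearized surrogate. Since $\|r_\tau\|$ is monotonically decreasing, I would first show it drops below $L\epsilon$ no later than $T_\epsilon$: the second entry of the maximum defining $T_\epsilon$, together with $\|y^\epsilon\|\ge\epsilon$, guarantees that $\tau_{\rm dp}^\epsilon$ is well defined and satisfies $\tau_{\rm dp}^\epsilon\le T_\epsilon$. At the stopping index the defining inequalities $\|r_{\tau_{\rm dp}^\epsilon}\|\le L\epsilon<\|r_{\tau_{\rm dp}^\epsilon-1}\|$ can be combined with the source condition through the classical interpolation inequality for Landweber iteration to balance bias against variance, yielding the minimax rate $\epsilon^{\nu/(\nu+1)}\rho^{1/(\nu+1)}$, with the additive $\epsilon$ contribution coming from the residual budget $L\epsilon$ itself.

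The hard part will be transferring these estimates from the linearized surrogate back to the true nonlinear iterate $G(C^\epsilon_{\tau^\epsilon_{\rm dp}})$, and this is precisely where the width requirement scaling like $T_\epsilon^{13}$ enters. Both the concentration of $\mathcal J(C_0)\mathcal J(C_0)^\top$ around $\Sigma(U)$ from Proposition~\ref{s2:p1} and the near-constancy of the ReLU activation pattern along the trajectory (hence the closeness of the nonlinear and linearized iterates and residuals) must hold \emph{uniformly} for all $\tau\le T_\epsilon$ on a single high-probability event, and the accumulated deviation over these iterations must be kept below the residual budget $(L-1)\epsilon$ so that the discrepancy stopping index for the nonlinear model coincides with that of the linearized one, cf. Remark~\ref{rmk:res-comparison}. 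Controlling this accumulation only up to $T_\epsilon$, rather than for all $\tau$, is what allows the width and the final error bound to be stated without reference to the smallest singular values of $A$ and $\Sigma(U)$; verifying that the per-step nonlinear error, summed over $T_\epsilon$ steps, remains within this budget with the stated probability $1-4\delta_\epsilon$ is the principal obstacle of the argument.
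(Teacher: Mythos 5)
Your plan follows essentially the same route as the paper's proof: exploit the spectral alignment to collapse the linearized dynamics into a scalar filter iteration with factors $1-(1-\alpha_j^2\sigma_j^2)^\tau$, run the classical bias--variance/discrepancy-principle analysis on that surrogate (well-definedness of $\tau_{\rm dp}^\epsilon\le T_\epsilon$ from the approximation-error decay, then the H\"older/interpolation step at the stopping index), and transfer back to the nonlinear iterate via the uniform-in-$\tau\le T_\epsilon$ closeness bounds of Theorem~\ref{s2:t1} and Proposition~\ref{s2:p1}, keeping the accumulated deviation and the $G(C_0)$ contribution within the $(L-1)\epsilon$ budget. You correctly identify the principal technical obstacle (uniform control only up to $T_\epsilon$, which is what removes the dependence on the smallest singular values), so the proposal is a faithful outline of the paper's argument.
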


Theorem \ref{t0} indicates that the discrepancy principle \eqref{eqn:dp} is indeed a feasible stopping criterion for convolutional generators for solving linear inverse problems, and furthermore, under the standard H\"{o}lder source condition, the obtained estimate $G(C^\epsilon_{\tau^\epsilon_{\rm dp}})$ is minimax optimal. This rate is comparable with that for the standard Landweber method with the discrepancy principle for linear inverse problems \cite[Theorem 6.5]{EnglHankeNeubauer:1996}, apart from the probabilistic nature of the estimate, which is due to the use of suitable concentration inequalities. The result provides a first step towards establishing a complete theoretical foundation of deep image prior type methods, and resolves the challenge with reliable early stopping rules for such methods \cite{Wang2021stopping}. 
 
The analysis is based on the key observation that for a sufficiently wide neural network (i.e., for $k$ sufficiently large), the dynamics (iterate trajectory and the residuals) of the non-linear least squares problem \eqref{e00} stay close to a nearby linear least squares problem. This follows closely recent developments in machine learning \cite{JacotHongler:2018,AroraDuHuLi:2019,Lee:2019,ChizatBach:2019,CaoGu:2019}. For the proof of Theorem \ref{t0}, we proceed along the following four steps:
\begin{itemize}
\item First, we analyze in Theorem \ref{s2:t1} a general non-linear least squares problem \eqref{step1:e1}, and show that under Assumptions \ref{a1}-\ref{a3}, the iterates and residuals of \eqref{step1:e1} stay close to that of a nearby linearized problem \eqref{step1:e2}.
\item Second, in Proposition \ref{s2:p1}, we show how to pick the width $k$ and the variance $\omega^2$ of the initial weights $C_0$, such that the convolutional generator $G(C)$ \eqref{e00} fulfills Assumptions \ref{a1}-\ref{a3} with high probability.
\item Third, in Theorem \ref{s2:t2}, we combine Theorem \ref{s2:t1} and Proposition \ref{s2:p1} to obtain a general \textit{a priori} probabilistic error estimate for properly chosen $k$ and $\omega^2$.
\item Finally, we analyze the discrepancy principle \eqref{eqn:dp} as a data-driven stopping rule to finish the proof of Theorem \ref{t0}. This step will be carried out in Section \ref{s3}.
\end{itemize}

This analysis strategy differs substantially from that for analyzing iterative regularization techniques for nonlinear inverse problems, which typically relies on suitable nonlinearity condition, e.g., tangential cone condition (for convergence) and also range invariance condition (for convergence rates) \cite{KaltenbacherNeubauerScherzer:2008}. This difference stems from the fact that the approximation of neural networks can be made uniformly small over the domain when the neural network width is sufficiently large, relieving the range invariance condition completely. The analysis also reveals one delicate point in investigating the generator approach, of representing the unknown $x^\dag$ via a (nonlinear) generator: the spectral alignment between different singular vector bases, of the forward map $A$ and the generator $\Sigma(U)$ (population Jacobian). To resolve the challenge in the analysis, we have resorted to a spectral alignment assumption that $\Sigma(U)$ and $A^\top A$ share the eigenbasis (i.e., perfect alignment) and it would be of interest to relax the assumption. {\color{black}Note that due to the convolutional nature of $U$, $\Sigma(U)$ has the standard trigonometric basis \cite{HeckelSoltanolkotabi:2020denoising}, and thus the condition that $A^\top A$ and $\Sigma(U)$ share eigenvectors can be verified for integral equations with translation invariant kernels, e.g., image deblurring. The singular values of $\Sigma(U)$ depends on the discrete Fourier transform of the filter $u$, and the algebraic decay imposes certain restriction on the filter $u$. However, we believe that the assumption of algebraically decaying singular values is not essential for the proofs.} The numerical experiments in Section \ref{s4} indicate that the alignment assumption does affect much the performance of the discrepancy principle \eqref{eqn:dp}.

\begin{remark}
{\color{black} The required width $k_\epsilon$ depends on the noise level $\epsilon$ polynomially (i.e. $\epsilon^{-8}$), with a large constant $2^{35}$. It is expected that one may improve the constants by using  sharper concentration inequalities. Numerically, we observe the convergence behavior of the gradient descent type methods for fairly small neural networks, indicating that there is still room for significantly improving the involved constants.}   
 \end{remark}

 \begin{remark}
 {\color{black}Note that Theorem \ref{t0} gives a convergence rate. The rate depends only on the smoothing parameter $\nu$ in the source-wise representation \eqref{eqn:source}, and is independent of the spectral decay exponents $p$ and $q$. However, the number of the required iterations to reach the discrepancy principle \eqref{eqn:dp} depends on both spectral decay exponents $p$ (of the matrix $\Sigma(U)$) and $q$ (of the operator $A^\top A$). For any fixed $q$, the larger is the exponent $p$, the more iterations the method requires. Moreover, one still cannot conclude from Theorem \ref{t0} that the concerned method is a regularizing scheme in the lens of the classical regularization theory \cite{EnglHankeNeubauer:1996,KaltenbacherNeubauerScherzer:2008,ItoJin:2015}. Indeed, the probability in Theorem \ref{t0} depends on the noise level $\epsilon$, and thus the estimate is not uniform across all noise levels (down to zero) and the established argument \cite[Theorem 3.18]{EnglHankeNeubauer:1996} does not apply directly. One possibility to investigate regularizing properties is to utilize a general source condition \cite{mathe2008general}.}
 \end{remark}
 
\section{Preliminary estimates}\label{s2}
In this section, we develop the first three steps of the proof, one step in each subsection. {\color{black} First we develop a general theory of approximating a non-linear least-squares problem, by replacing a nonlinear map $f(\theta)$ by a linearized model $f_{\rm lin}=f(\theta_0)+J(\theta-\theta_0)$ for a suitable reference Jacobian $J$, in Section \ref{ssec:gen-nonlin}. The theory gives precise bounds on the iterates and residuals for the nonlinear and linearized models in terms of the approximation tolerance, cf. Theorem \ref{s2:t1} for the precise statement. This analysis is conducted under Assumptions \ref{a1}--\ref{a3}, which stipulates the approximation tolerance of the reference Jacobian $J$ and the local variation of the Jacobian $\mathcal{J}(\theta)$ in the parameter $\theta$. Then we verify the requisite conditions for the two-layer CNN generator $G(C)$, with the vectorized $C$ assuming the role of $\theta$ in Section \ref{ssec:conv-generator}. The main result is given in Proposition \ref{s2:p1}. This step invokes the concentration inequality to implicitly construct the reference Jacobian $J$ via the population covariance $\Sigma(U)$ of the Jacobian $\mathcal{J}(C_0)$ at the initial guess $C_0$. Last, using the fundamental estimates in Theorem \ref{s2:t1}, we derive a general error bound in Theorem \ref{s2:t2} for the convolutional generator $G(C)$ for solving linear inverse problems in Section \ref{ssec:general-bound}. This last step forms the basis for analyzing the a posterior stopping rule, i.e., the discrepancy principle \eqref{eqn:dp}.} 

\subsection{Theory for a general nonlinear least-squares problem}\label{ssec:gen-nonlin}
In the first step, we compare the following non-linear least squares problem
\begin{equation}\label{step1:e1}
 \mathcal{L}(\theta):=\tfrac{1}{2}\|A f(\theta) - y\|^2
 \end{equation}
  to the linearized least squares problem
\begin{equation}\label{step1:e2}
 \mathcal{L}_{\rm lin}(\theta):=\tfrac{1}{2}\|Af(\theta_0) + AJ(\theta-\theta_0) - y\|^2,
 \end{equation}
where $f:\R^N\to \R^n$ is a non-linear map with parameters $\theta\in\R^N$ and $J\in\R^{n\times N}$ is a fixed {\color{black} deterministic} matrix, called the reference Jacobian, which approximates the Jacobian {\color{black}$\mathcal{J}(\theta_0)=\nabla_\theta f(\theta)|_{\theta=\theta_0}$ (i.e., the Jacobian with respect to the parameter $\theta$) evaluated at} the starting point $\theta_0\in\R^N$, and for notational simplicity, below we denote the gradient $\mathcal{J}(\theta_\tau)$ by $\mathcal{J}_\tau$, i.e.,
$\mathcal{J}_\tau:=\mathcal{J}(\theta_\tau).$ {\color{black}Since $J$ in the linearized problem \eqref{step1:e2} is fixed throughout the iteration, it greatly facilitates the analysis of the linearized iteration below.}

The gradient descent updates starting from $\theta_0$ for problems \eqref{step1:e1} and \eqref{step1:e2} with a constant step size $\eta$ are respectively given by
\begin{align}\label{s2:e1}
\theta_{\tau+1} &= \theta_\tau- \eta \nabla \mathcal{L}(\theta_\tau) = \theta_\tau - \eta \mathcal{J}_\tau^\top A^\top (Af(\theta_\tau)-y) = \theta_\tau - \eta \mathcal{J}^\top _\tau A^\top r_\tau,\\
\label{s2:e2}
\tilde{\theta}_{\tau+1}&=\tilde{\theta_\tau} - \eta \nabla\mathcal{L}_{\rm lin}(\tilde{\theta}_\tau)=\tilde{\theta}_\tau - \eta J^\top A^\top(Af(\theta_0) + AJ(\theta_\tau-\theta_0) - y)=\tilde{\theta}_\tau -\eta J^\top A^\top \tilde{r}_\tau,
\end{align}
with the corresponding nonlinear and linear residuals given respectively by
\begin{align*}
r_\tau:=Af(\theta_\tau) - y\quad \mbox{and}\quad 
\tilde{r}_\tau:=Af(\theta_0) + AJ(\tilde{\theta}_\tau-\theta_0) - y.
\end{align*}
The linear residual $\tilde r_\tau$ admits a closed form expression. Indeed, by the definition of $\tilde\theta_\tau$,
\begin{align*}
\tilde r_\tau &=r_0+ AJ(\tilde{\theta}_\tau - \theta_0)\\
&= r_0 + AJ(\tilde{\theta}_{\tau-1} -\eta J^\top A^\top \tilde{r}_{\tau-1} - \theta_0)\\
&= r_0 + AJ(\tilde{\theta}_{\tau-1} -\theta_0) - \eta AJJ^\top A^\top \tilde{r}_{\tau-1}\\
& = \tilde r_{\tau -1} - \eta AJJ^\top A^\top \tilde{r}_{\tau-1} = (I-\eta AJJ^\top A^\top) \tilde{r}_{\tau-1}.
\end{align*}
Applying the recursion $\tau$ times yields the following expression for the linearized residual
\begin{align}\label{eqn:res-lin}
\tilde r_\tau =(I-\eta AJJ^\top A^\top)^\tau r_0.
 \end{align}
 Meanwhile, it follows from  the representation \eqref{s2:e2} that
 \begin{align}
     \tilde\theta_{\tau} &= \tilde \theta_{\tau-1} - \eta J^\top A^\top (I-\eta AJJ^\top A^\top)^{\tau-1} r_0\nonumber\\
    & = \tilde \theta_0-\eta J^\top A^\top \sum_{t=0}^{\tau-1} (I-\eta AJJ^\top A^\top)^{t}r_0\nonumber\\
    & =  \theta_0 - J^\top A^\top (AJJ^\top A^\top)^{-1}(I-(I-\eta AJJ^\top A^\top)^\tau)r_0,\label{eqn:theta-tilde}
 \end{align}
 where the $(AJJ^\top A^\top)^{-1}$ denotes the pseudo-inverse of the matrix, when it is not invertible.

Throughout, we denote by $\gamma\ge\|A\|$ an upper bound on the (spectral) norm of the matrix $A$. Further, we make the following assumptions on $\mathcal{J}(\theta)$ and the reference Jacobian $J$. {\color{black}These assumptions will be verified for a suitably over-parameterized convolutional generator $G(\theta)$ in Section \ref{ssec:conv-generator} below. Assumption \ref{a1} is about uniform boundedness of the Jacobian $\mathcal{J}(\theta)$, Assumption \ref{a2} the closeness of the initial Jacobian $\mathcal{J}_0$ to the reference one $J$, and Assumption \ref{a3} the uniform closeness of Jacobian $\mathcal{J}(\theta)$ to the initial one $\mathcal{J}(\theta_0)$. These assumptions are common for neural tangent kernel type analysis of neural networks \cite{JacotHongler:2018,ChizatBach:2019}. Note that one should not confuse the tolerance $\varepsilon$ with the noise level $\epsilon$.}
\begin{assumption}\label{a1}
There exists $\beta>0$ such that $\|J\|\le \beta$ and $\|\mathcal{J}(\theta)\|\le \beta$ for all $\theta\in\R^N$. 
\end{assumption}

\begin{assumption}\label{a2}
There exists $\varepsilon_0>0$ such that $\|\mathcal{J}_0 - J\| \le \varepsilon_0$ and $\|\mathcal{J}_0\mathcal{J}^\top_0 - JJ^\top\|\le \varepsilon_0^2$.
\end{assumption}

\begin{assumption}\label{a3}
There exist $\varepsilon>0$ and $R>0$ such that $\|\mathcal{J}(\theta)-\mathcal{J}(\theta_0)\| \le \frac{\varepsilon}{2}$ for all $\theta\in\R^N$ with $\|\theta-\theta_0\|\le R$.
 \end{assumption}

\begin{remark}
These assumptions differ slightly from the ones in Heckel \& Soltanolkotabi \cite{HeckelSoltanolkotabi:2020denoising}. More precisely, \cite{HeckelSoltanolkotabi:2020denoising} requires an additional restriction in Assumption \ref{a1} that the singular values $\sigma_i$ of the reference Jacobian $J$ are bounded by $\alpha$ and $\beta$ $($with $0<\alpha<\beta$$)$, i.e.,
$\alpha\le \sigma_n \le .. \le \sigma_1 \le \beta.$ Since the singular values $\sigma_i$ typically decay rapidly (which actually has to be employed in the existing results), the dependence of the derived estimates on {\rm(}inversely proportional to{\rm)} the smallest singular value $\sigma_{\min}$ is unsatisfactory. By an alternative way of proof, we remove this dependence. In the course of the proof, we shall also see that Assumption \ref{a1} can be relaxed to the condition $\|J\|\le \beta$ and $\|\mathcal{J}(\theta)\|\le \beta$ for all $\theta\in\mathbb{R}^N$ with $\|\theta-\theta_0\|\le R$.
\end{remark}

Now we show that the iterates $\theta_\tau$ and $ \tilde{\theta}_\tau$ and the corresponding residuals $r_\tau$ and $\tilde{r}_\tau$ stay close to each other throughout the iterations (up to the maximum number of iterations $T$).
\begin{theorem}\label{s2:t1}
Let Assumptions \ref{a1}, \ref{a2} and \ref{a3} be fulfilled, and furthermore, let the radius $R$ in Assumption \ref{a3} satisfy the following bound
\begin{equation*}
R\ge 2\|r_0\|\max\left( \eta\gamma\beta\left(1+2\eta \gamma^2(\varepsilon_0^2+\beta\varepsilon) T\right), \sqrt{T}\left(\sqrt{\eta} + \eta\gamma \sqrt{T}\left(\varepsilon+\varepsilon_0+\eta \gamma^2\beta(\varepsilon_0^2+\varepsilon\beta)T\right)\right)\right),
\end{equation*}
where the maximum number  $T\in\N$ of iterations fulfils $T\le \frac{1}{2\eta\gamma^2\varepsilon^2}$. Moreover, the constant step size $\eta$ obeys $\eta\le \frac{1}{\beta^2\gamma^2}$. Then for all $\tau\le T$, the following three estimates hold
\begin{align}\label{s2:t1:e1}
\|\theta_\tau-\theta_0\|&\le \tfrac{R}{2},\\\label{s2:t1:e2}
\|\tilde{r}_\tau-r_\tau\|&\le 2\eta \gamma^2(\varepsilon_0^2+\beta\varepsilon) \tau \|r_0\|,\\\label{s2:t1:e3}
\|\tilde{\theta}_\tau-\theta_\tau\|&\le \eta\gamma \left(\varepsilon+\varepsilon_0+\eta \gamma^2\beta(\varepsilon_0^2+\varepsilon\beta )\tau\right)\tau \|r_0\|.
\end{align}
\end{theorem}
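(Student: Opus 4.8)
The plan is to establish the three bounds \eqref{s2:t1:e1}--\eqref{s2:t1:e3} simultaneously by induction on $\tau$. The three ingredients I would rely on are the gradient descent recursions \eqref{s2:e1}--\eqref{s2:e2}, the closed-form linear residual \eqref{eqn:res-lin}, and the closed form \eqref{eqn:theta-tilde} for $\tilde\theta_\tau$. The induction hypothesis at step $\tau$ would assert all three estimates for every index up to $\tau$, and from it I would deduce them at $\tau+1$, in the order \eqref{s2:t1:e2}, \eqref{s2:t1:e3}, then \eqref{s2:t1:e1}. Throughout, $\eta\le\frac1{\beta^2\gamma^2}$ makes $\eta AJJ^\top A^\top$ have spectrum in $[0,1]$ by Assumption \ref{a1}, so that $\|I-\eta AJJ^\top A^\top\|\le1$ and hence $\|\tilde r_\tau\|\le\|r_0\|$ via \eqref{eqn:res-lin}.

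For the residual estimate \eqref{s2:t1:e2}, I would write the nonlinear update in mean-value form: with the averaged Jacobian $\bar{\mathcal J}_\tau:=\int_0^1\mathcal J(\theta_\tau+s(\theta_{\tau+1}-\theta_\tau))\,\mathrm{d}s$ one has $f(\theta_{\tau+1})-f(\theta_\tau)=\bar{\mathcal J}_\tau(\theta_{\tau+1}-\theta_\tau)$, so \eqref{s2:e1} gives $r_{\tau+1}=(I-\eta A\bar{\mathcal J}_\tau\mathcal J_\tau^\top A^\top)r_\tau$. Subtracting $\tilde r_{\tau+1}=(I-\eta AJJ^\top A^\top)\tilde r_\tau$ yields
\begin{equation*}
r_{\tau+1}-\tilde r_{\tau+1}=(I-\eta AJJ^\top A^\top)(r_\tau-\tilde r_\tau)+\eta A(JJ^\top-\bar{\mathcal J}_\tau\mathcal J_\tau^\top)A^\top r_\tau.
\end{equation*}
Telescoping $JJ^\top-\bar{\mathcal J}_\tau\mathcal J_\tau^\top$ through $\mathcal J_0\mathcal J_0^\top$ and invoking Assumptions \ref{a1}--\ref{a3} gives $\|JJ^\top-\bar{\mathcal J}_\tau\mathcal J_\tau^\top\|\le\varepsilon_0^2+\beta\varepsilon$; this requires every point of the segment $[\theta_\tau,\theta_{\tau+1}]$ to lie in the ball of radius $R$ where Assumption \ref{a3} applies, which follows from \eqref{s2:t1:e1} at $\tau$ together with the single-step bound $\|\theta_{\tau+1}-\theta_\tau\|\le\eta\beta\gamma\|r_\tau\|$, controlled by the first term of the radius hypothesis. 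Summing the resulting scalar recursion and using $\|\tilde r_\tau\|\le\|r_0\|$ to control $\|r_\tau\|$ in terms of $\|r_0\|$ then delivers \eqref{s2:t1:e2}.

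For \eqref{s2:t1:e3} I would subtract \eqref{s2:e2} from \eqref{s2:e1} to obtain $\tilde\theta_{\tau+1}-\theta_{\tau+1}=(\tilde\theta_\tau-\theta_\tau)+\eta(\mathcal J_\tau-J)^\top A^\top r_\tau+\eta J^\top A^\top(r_\tau-\tilde r_\tau)$, bound $\|\mathcal J_\tau-J\|\le\frac\varepsilon2+\varepsilon_0$ via Assumptions \ref{a2}--\ref{a3}, insert \eqref{s2:t1:e2}, and telescope. For \eqref{s2:t1:e1} I would split $\|\theta_\tau-\theta_0\|\le\|\tilde\theta_\tau-\theta_0\|+\|\tilde\theta_\tau-\theta_\tau\|$, where the second term is \eqref{s2:t1:e3}. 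The main term is the linear displacement, and the crucial estimate -- the one that removes any dependence on the smallest singular value of $AJ$ -- is
\begin{equation*}
\|J^\top A^\top(AJJ^\top A^\top)^{-1}(I-(I-\eta AJJ^\top A^\top)^\tau)\|\le\sqrt{\eta\tau},
\end{equation*}
which I would obtain from \eqref{eqn:theta-tilde} by diagonalizing $AJ$ and maximizing $\frac{1-(1-\eta s^2)^\tau}{s}\le\min(s^{-1},\eta\tau s)\le\sqrt{\eta\tau}$ over its singular values $s$ (admissible since $\eta s^2\le1$). Hence $\|\tilde\theta_\tau-\theta_0\|\le\sqrt{\eta\tau}\|r_0\|$, and combining with \eqref{s2:t1:e3} and $\tau\le T$ reproduces exactly the second term of the radius hypothesis, giving $\|\theta_\tau-\theta_0\|\le\frac R2$.

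The main obstacle is the tight coupling of the three estimates together with the need to keep the iterates -- and the segments joining consecutive iterates -- inside the ball of radius $R$ on which Assumption \ref{a3} is available; this is exactly what forces the two-term form of the lower bound on $R$ and dictates the ordering of the induction (deriving the single-step control before the averaged-Jacobian bound breaks the apparent circularity). A secondary but essential point is controlling the growth of the nonlinear residual $\|r_\tau\|$ over the finite horizon: unlike $\tilde r_\tau$, the nonlinear residual is not manifestly non-expansive, and it is the iteration cap $T\le\frac1{2\eta\gamma^2\varepsilon^2}$, acting through the second-order terms in \eqref{s2:t1:e3}, that keeps the deviation comparable to the main displacement $\sqrt{\eta\tau}\|r_0\|$ and prevents the bounds from degenerating. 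The estimate $\sqrt{\eta\tau}$ is the key new ingredient relative to \cite{HeckelSoltanolkotabi:2020denoising}.
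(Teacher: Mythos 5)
Your overall architecture---induction over $\tau$, the averaged Jacobian $\bar{\mathcal J}_\tau$ giving the mean-value form of the nonlinear residual recursion, the single-step bound $\|\theta_{\tau+1}-\theta_\tau\|\le\eta\gamma\beta\|r_\tau\|$ used to keep the segment $[\theta_\tau,\theta_{\tau+1}]$ inside the ball where Assumption \ref{a3} applies, the telescoping argument for \eqref{s2:t1:e3}, and above all the estimate $\sup_{0<\lambda\le1}\lambda^{-1/2}(1-(1-\lambda)^\tau)\le\sqrt{\tau}$ yielding $\|\tilde\theta_\tau-\theta_0\|\le\sqrt{\eta\tau}\,\|r_0\|$ with no dependence on the smallest singular value---is exactly the paper's, and your identification of the $\sqrt{\eta\tau}$ bound as the key departure from \cite{HeckelSoltanolkotabi:2020denoising} is on target.

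However, there is a genuine gap in your argument for \eqref{s2:t1:e2}. You chose the splitting $e_{\tau+1}=(I-\eta AJJ^\top A^\top)e_\tau+\eta A(JJ^\top-\bar{\mathcal J}_\tau\mathcal J_\tau^\top)A^\top r_\tau$, which attaches the Jacobian perturbation to the \emph{nonlinear} residual $r_\tau$. Since $r_\tau$ can only be controlled via $\|r_\tau\|\le\|\tilde r_\tau\|+\|e_\tau\|\le\|r_0\|+\|e_\tau\|$, the scalar recursion becomes $\|e_{\tau+1}\|\le(1+c)\|e_\tau\|+c\|r_0\|$ with $c=\eta\gamma^2(\varepsilon_0^2+\beta\varepsilon)$, whose solution $((1+c)^\tau-1)\|r_0\|$ is bounded by the target $2c\tau\|r_0\|$ only when $c\tau\le\tfrac12$. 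The theorem assumes only $T\eta\gamma^2\varepsilon^2\le\tfrac12$, and $\varepsilon_0^2+\beta\varepsilon\gg\varepsilon^2$ in the relevant regime (with $\beta=\gamma=\eta=1$, $\varepsilon_0=\varepsilon$ and $T=\tfrac1{2\varepsilon^2}$ one gets $cT=\tfrac12+\tfrac1{2\varepsilon}\to\infty$), so your induction does not close under the stated hypotheses. The paper uses the reversed splitting $e_{\tau+1}=(I-\eta AS_\tau\mathcal J_\tau^\top A^\top)e_\tau+\eta A(JJ^\top-S_\tau\mathcal J_\tau^\top)A^\top\tilde r_\tau$: the perturbation then multiplies $\tilde r_\tau$, which is non-expansively bounded by $\|r_0\|$ with no feedback, while the propagator acting on $e_\tau$ has norm at most $1+\eta\gamma^2\varepsilon^2$ (Lemma 6.3 of \cite{OymakFabianLiSoltanolkotabi:2019}, exploiting that $AS_\tau$ and $A\mathcal J_\tau$ are within $\gamma\varepsilon$ of each other), which is precisely what the cap $T\le\tfrac1{2\eta\gamma^2\varepsilon^2}$ controls via $(1+\eta\gamma^2\varepsilon^2)^T\le2$. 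The analogous choice in your derivation of \eqref{s2:t1:e3}---letting $(\mathcal J_\tau-J)^\top$ act on $r_\tau$ rather than on $\tilde r_\tau$---inflates the constants for the same reason, though there it is only a constant-factor nuisance; for \eqref{s2:t1:e2} the wrong splitting is fatal as stated and must be reversed.
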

\begin{proof}
We proceed along the lines of Heckel and Soltanolkotabi \cite{HeckelSoltanolkotabi:2020denoising}, using mathematical induction over $\tau\le T-1$. The statement for the base case $\tau=0$ holds trivially.

\medskip

\noindent{\bf Step 1: The next iterate $\theta_\tau$ fulfills $\|\theta_\tau - \theta_0\|\le R$.} By the triangle inequality, we have
\begin{equation*}
\|\theta_\tau- \theta_0\| \le \|\theta_\tau-\theta_{\tau-1}\| + \|\theta_{\tau-1}-\theta_0\| \le \|\theta_\tau - \theta_{\tau-1}\|  + \tfrac{R}{2}.
\end{equation*}
Since $\eta\le \frac{1}{\gamma^2\beta^2}$, we have $\|I-\eta A JJ^\top A^\top \|\le 1$, and thus $\|\tilde{r}_\tau\|\le \|r_0\|$. Consequently, by the definition of $\theta_\tau$, Assumption \ref{a1} and the induction hypothesis \eqref{s2:t1:e2} (for $\tau-1$),
\begin{align*}
\eta^{-1}\|\theta_\tau-\theta_{\tau-1}\|&=\|\nabla \mathcal{L}(\theta_{\tau-1})\|  = \|\mathcal{J}^\top_{\tau-1} A^\top  r_{\tau-1}\|\le \|\mathcal{J}^\top_{\tau-1} A^\top\|\left(\|\tilde{r}_{\tau-1}\| + \|\tilde{r}_{\tau-1} - r_{\tau-1}\|\right)\\
&\le \gamma\beta\left(\|\tilde{r}_{\tau-1}\| + \|r_{\tau-1}-\tilde{r}_{\tau-1}\|\right)\le \gamma\beta\|r_0\|\left(1+2\eta \gamma^2(\varepsilon_0^2+\beta\varepsilon) T\right),
\end{align*}
and we deduce the assertion of {\bf Step 1} from the definition of the radius $R$ that
$\|\theta_\tau-\theta_{\tau-1}\| \le \tfrac{R}{2}$.

\medskip

\noindent{\bf Step 2: Proof of \eqref{s2:t1:e2}.} From {\bf Step 1}, we have that $\|\theta_\tau-\theta_0\|\le R$ for all $\tau\le T$. Let $e_\tau:=r_\tau-\tilde{r}_\tau$. We follow Lemma 6.7 of Oymak et al \cite{OymakFabianLiSoltanolkotabi:2019} and set
$S_\tau:=\int_0^1 \mathcal{J}(\theta_\tau + t(\theta_{\tau+1}-\theta_\tau)){\rm d}t$.
Then by Assumption \ref{a1}, $\|S_\tau\|\leq \beta$. From the intermediate value theorem, we deduce
$f(\theta_{\tau+1}) = f(\theta_\tau) + S_\tau (\theta_{\tau+1}-\theta_\tau)$.
Thus, we obtain
\begin{align*}
r_{\tau+1}&=Af(\theta_{\tau+1}) -y = Af(\theta_\tau)-\eta AS_\tau\mathcal{J}^\top_\tau A^\top(Af(\theta_{\tau})-y) -y=\left(I-\eta A S_\tau \mathcal{J}^\top_\tau A^\top\right)r_\tau.
\end{align*}
Similarly, we have
$\tilde{r}_{\tau+1}=(I-\eta A J J^\top A^\top)\tilde{r}_\tau.$
Therefore, by the triangle inequality,
\begin{align*}
\|e_{\tau+1}\|&=\|(I-\eta A S_\tau \mathcal{J}^\top_\tau A^\top)r_\tau-(I-\eta A J J^\top A^\top)\tilde{r}_\tau\|\\
&\le \|I-\eta AS_\tau \mathcal{J}^\top_\tau A^\top\| \| r_{\tau}-\tilde{r}_\tau\| + \eta\left\|A S_\tau \mathcal{J}^\top_\tau A^\top - A J J^\top A^\top\right\|\|\tilde{r}_\tau\|.
\end{align*}
Next we bound the two terms separately.
First, by Assumption \ref{a3}, we have
\begin{equation*}
\|A\mathcal{J}_\tau\|,\, \|AS_\tau\|\le \gamma \beta \qquad\mbox{and}\qquad    \|AS_\tau - A\mathcal{J}_\tau\|\le \gamma \left(\|S_\tau-\mathcal{J}_0\|+\|\mathcal{J}_0-\mathcal{J}_\tau\|\right)\le \gamma\varepsilon.  
\end{equation*}
Thus from Lemma 6.3 of Oymak et al \cite[p. 21]{OymakFabianLiSoltanolkotabi:2019} we deduce
\begin{align*}
\|I-\eta A S_\tau \mathcal{J}^\top_\tau A^\top\|\leq1+\eta\gamma^2 \varepsilon^2.
\end{align*}
Second, by the triangle inequality and Assumptions \ref{a1}, \ref{a2} and \ref{a3}, we have
\begin{align*}
&\|AS_\tau \mathcal{J}^\top_\tau A^\top-AJJ^\top A^\top\|\\
&\qquad\le \gamma^2\left(\|S_\tau \mathcal{J}^\top _\tau- S_\tau\mathcal{J}^\top_0\| +\|S_\tau\mathcal{J}^\top_0 - \mathcal{J}_0 \mathcal{J}^\top_0\| + \|\mathcal{J}_0\mathcal{J}^\top_0-JJ^\top\|\right) \\
&\qquad\le \gamma^2\left(\|S_\tau\|\| \mathcal{J}_\tau-\mathcal{J}_0\| + \|\mathcal{J}_0\|\|S_\tau-\mathcal{J}_0\| + \|\mathcal{J}_0\mathcal{J}^\top_0-JJ^\top\|\right) \le \gamma^2(\beta\varepsilon+\varepsilon_0^2).
\end{align*}
Consequently, we deduce
\begin{equation}\label{s2:t1:e4}
\|e_{\tau+1}\| \le \eta \gamma^2 (\varepsilon_0^2+\beta\varepsilon)\|\tilde{r}_{\tau}\| + (1+\eta \gamma^2\varepsilon^2)\|e_\tau\|.
\end{equation}
Now we use the estimate $\|\tilde{r}_\tau\| \le \|r_0\|$. Since $e_0=r_0-\tilde{r}_0 = 0$, applying the inequality \eqref{s2:t1:e4} recursively yields
\begin{align*}
\|e_\tau\| &\le \eta\gamma^2(\varepsilon_0^2+\beta\varepsilon) \|r_0\| + (1+\eta\gamma^2 \varepsilon^2)\|e_{\tau-1}\|\\
&\le \eta\gamma^2(\varepsilon_0^2+\beta\varepsilon) \|r_0\| + (1+\eta \gamma^2\varepsilon^2)\left(\eta\gamma^2(\varepsilon_0^2+\beta\varepsilon) \|r_0\| + (1+\eta\gamma^2 \varepsilon^2)\|e_{\tau-2}\|\right) \\
&\le  ... \le \eta \gamma^2(\varepsilon_0^2 + \beta\varepsilon) \|r_0\|\sum_{j=0}^{\tau-1}(1+\eta\gamma^2\varepsilon^2)^j = \eta\gamma^2 (\varepsilon_0^2+\beta\varepsilon) \|r_0\| \frac{ (1+\eta\gamma^2\varepsilon^2)^\tau -1 }{\eta \gamma^2\varepsilon^2}.
\end{align*}
Using the inequalities $1+x\le e^x$ (for all $x\in\mathbb{R}$), $\log(1+x)\le x$ (valid for all $x>-1$), and the estimate $e^x\le \frac{1}{1-x}$ (valid for any $x<1$), we deduce
\begin{align*}
(1+\eta\gamma^2\varepsilon^2)^\tau &= e^{\tau\log\left(1+\eta\gamma^2\varepsilon^2\right)} \le e^{\tau \eta \gamma^2\varepsilon^2} \le \frac{1}{1-\tau \eta \gamma^2\varepsilon^2},
\end{align*}
since $\tau \eta \gamma^2\varepsilon^2 \le T\eta\gamma^2\varepsilon^2 \le \frac{1}{2}$, by the choice of $T$. Consequently,
\begin{align*}
\frac{(1+\eta \gamma^2\varepsilon^2)^\tau-1}{\eta \gamma^2\varepsilon^2} &\le \frac{\frac{1}{1-\tau \eta \gamma^2\varepsilon^2} - 1}{\eta \gamma^2\varepsilon^2} = \frac{\tau}{1-\tau \eta \gamma^2\varepsilon^2} \le \frac{\tau}{1-T\eta\gamma^2\varepsilon^2}\le 2\tau,
\end{align*}
which finally yields
$\|e_\tau\|\le 2\eta \gamma^2(\varepsilon_0^2+\beta\varepsilon) \|r_0\| \tau$,
as stated in \eqref{s2:t1:e2}.

\begin{remark}\label{rmk:res-comparison}
Note that using the lower bound $\alpha$ for the singular values of the reference Jacobian $J$ $($and $A=I$ and $\gamma=1$$)$, Heckel and Soltanolkotabi \cite[equation (34)]{HeckelSoltanolkotabi:2020denoising} derived the following uniform bound
\begin{equation}\label{eqn:bdd-HS}
    \|e_\tau\|\le 2 \alpha^{-2}\beta(\varepsilon_0+\varepsilon)\|r_0\|,\quad\forall \tau \in\mathbb{N}.
\end{equation}
In sharp contrast, the bound \eqref{s2:t1:e2} holds only for $\tau\leq T$. The difference stems from the ill-conditioning of $A$ and $J$.
The above bound \eqref{eqn:bdd-HS} might be much more pessimistic than the bound \eqref{s2:t1:e2}, depending on the concrete values of $\alpha$ and $\tau$.
\end{remark}

\medskip

\noindent{\bf Step 3: Proof of \eqref{s2:t1:e3}.} With $\tilde{\theta}_0=\theta_0$, applying the recursions \eqref{s2:e1} and \eqref{s2:e2} $\tau$ times yields
\begin{align*}
\eta^{-1}(\theta_\tau -\tilde{\theta}_\tau) &= \sum_{t=0}^{\tau-1}\left(\nabla \mathcal{L}(\theta_t) - \nabla \mathcal{L}_{\rm lin}(\theta_t)\right) = \sum_{t=0}^{\tau-1} \left(\mathcal{J}^\top_t A^\tau r_t - J^\top A^\top \tilde{r}_t\right).
\end{align*}
Now appealing to the estimate \eqref{s2:t1:e2} and then Assumptions \ref{a1} and \ref{a2} lead to
\begin{align*}
\eta^{-1}\|\theta_\tau -\tilde{\theta}_\tau\|&\le \sum_{t=0}^{\tau-1}\left(\|(\mathcal{J}^\top_t - J^\top)A^\top \tilde{r}_t\| + \|\mathcal{J}^\top_tA^\top(r_t-\tilde{r}_t)\|\right)\\
&\le \gamma\sum_{t=0}^{\tau-1}\left((\varepsilon_0+\varepsilon)\|\tilde{r}_t\| + \beta \|e_t\|\right)\\
&\le \gamma(\varepsilon_0+\varepsilon) \sum_{t=0}^{\tau-1}\|r_0\| + 2
\eta \gamma^3\beta(\varepsilon_0^2+\beta\varepsilon) \|r_0\| \sum_{t=0}^{\tau-1} t\\
&\le \gamma \tau \|r_0\|\left(\varepsilon+\varepsilon_0+\eta \gamma^2\beta(\varepsilon_0^2+\varepsilon\beta)\tau\right),
\end{align*}
which finishes the proof of {\bf Step 3}.

\begin{remark}
It is instructive to compare this bound to the following known bound $($for $A=I$ and $\gamma=1$ \cite[p. 22]{HeckelSoltanolkotabi:2020denoising}$)$:
$\|\theta_\tau-\tilde{\theta}_\tau\| \le \frac{(\varepsilon_0+\varepsilon)}{\eta \alpha^2}(1+2\eta \tau \beta^2)\|r_0\|$.
\end{remark}

\medskip

\noindent{\bf Step 4: Proof of \eqref{s2:t1:e1}.} Let $\sigma_i',w_i'$ be the eigenvalue decomposition of $A JJ^\top A^\top\in\mathbb{R}^{m\times m}$:
$$AJJ^\top A^\top w_i'= \sigma_i'^2  w_i',\quad i=1,\ldots,m,$$
where $w_i'\in\mathbb{R}^m$ are  orthonormal.
This, the representation \eqref{eqn:theta-tilde} and the estimate \eqref{s2:t1:e3} yield
\begin{align*}
\|\theta_\tau-\theta_0\| &\le \|\tilde{\theta}_\tau - \theta_0\| + \|\tilde{\theta}_\tau-\theta_\tau\|\\
&\le \sqrt{\sum_{i=1}^n(w_i',r_0)^2 \frac{(1-(1-\eta\sigma_i'^2)^\tau)^2}{\sigma_i'^2}} + \eta \gamma \tau \|r_0\|\left(\varepsilon+\varepsilon_0+\eta \gamma^2\beta(\varepsilon_0^2+\varepsilon\beta )\tau\right).
\end{align*}
Since  $\eta\sigma_i'^2\le \eta \gamma^2\beta^2 \le 1$, and in view of the inequality
\begin{equation}\label{inequality}
\sup_{0<\lambda\le 1} \lambda^{-\frac{1}{2}}|1-(1-\lambda)^\tau| \le \sqrt{\tau},
\end{equation}
cf. Lemma \ref{lem:basic-est} in the appendix, 
further from the choice of the radius $R$, we deduce
\begin{align*}
\|\theta_\tau -\theta_0\|&\le \sqrt{\eta\tau} \sqrt{\sum_{j=1}^n (w_i',r_0)^2} + \eta\gamma \tau \|r_0\|\left(\varepsilon+\varepsilon_0+\eta \gamma^2\beta(\varepsilon_0^2+\varepsilon\beta)\tau\right)\\
&\le \sqrt{\eta} \sqrt{T} \|r_0\| + \eta\gamma T \|r_0\|\left(\varepsilon+\varepsilon_0+\eta \gamma^2\beta(\varepsilon_0^2+\varepsilon\beta)T\right)\\
&=\sqrt{T}\left(\sqrt{\eta} + \eta\gamma \sqrt{T}\left(\varepsilon+\varepsilon_0+\eta \gamma^2\beta(\varepsilon_0^2+\varepsilon\beta)T\right)\right)\|r_0\| \le \frac{R}{2}.
\end{align*}
This concludes the proof of {\bf Step 4} and the induction step, and therefore also Theorem \ref{s2:t1}.
\end{proof}

In order to obtain simplified expressions for various bounds, from now on we will assume $\beta=\gamma=\eta=1$. For the general case, the results can be adjusted appropriately, using Theorem \ref{s2:t1}. First we give an easy corollary. These estimates will be used extensively below.
\begin{corollary}\label{s2:c1}
For $\beta=\gamma=\eta=1$, let Assumptions \ref{a1}, \ref{a2} and \ref{a3} hold with $\varepsilon_0=\varepsilon\le 1$, and fix
\begin{equation*}
R\ge 2\|r_0\|(\sqrt{T}+2\varepsilon T^2),
\end{equation*}
where the maximum number of iterations $T\in\N$ fulfils $T\le \frac{1}{2\varepsilon^2}$. Then, for all $\tau\le T$, the following bounds hold
\begin{align*}
\|\theta_\tau-\theta_0\|&\le \frac{R}{2},\quad
\|\tilde{r}_\tau-r_\tau\|\le 4\varepsilon \tau \|r_0\|,\quad \mbox{and}\quad \|\tilde{\theta}_\tau-\theta_\tau\|\le  2\varepsilon \tau^2 \|r_0\|.
\end{align*}
\end{corollary}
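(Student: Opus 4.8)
The plan is to obtain Corollary~\ref{s2:c1} as a direct specialization of Theorem~\ref{s2:t1}, so the whole argument reduces to substituting the parameter values $\beta=\gamma=\eta=1$ and $\varepsilon_0=\varepsilon$ into the hypotheses and conclusions, and then simplifying. First I would check that the hypotheses of the theorem collapse to the ones stated here: the step-size restriction $\eta\le\frac{1}{\beta^2\gamma^2}$ becomes $1\le 1$, which holds with equality, and the iteration-count bound $T\le\frac{1}{2\eta\gamma^2\varepsilon^2}$ becomes exactly $T\le\frac{1}{2\varepsilon^2}$, as required. Thus the only hypothesis needing genuine attention is the radius condition.

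The engine behind every simplification is the single inequality $\varepsilon\le 1\Rightarrow\varepsilon^2\le\varepsilon$, which lets me bound each mixed term $\varepsilon_0^2+\beta\varepsilon=\varepsilon^2+\varepsilon$ by $2\varepsilon$. Applying this to the conclusion \eqref{s2:t1:e2} of Theorem~\ref{s2:t1} immediately gives $\|\tilde r_\tau-r_\tau\|\le 2(\varepsilon^2+\varepsilon)\tau\|r_0\|\le 4\varepsilon\tau\|r_0\|$. Applying it to \eqref{s2:t1:e3}, after substitution the bracket becomes $\varepsilon+\varepsilon+(\varepsilon^2+\varepsilon)\tau=2\varepsilon+(\varepsilon^2+\varepsilon)\tau$; collecting the linear-in-$\tau$ and quadratic-in-$\tau$ contributions and again invoking $\varepsilon^2\le\varepsilon$ (together with $\tau\le T$ to absorb the lower-order term into the quadratic one) yields the stated iterate bound $\|\tilde\theta_\tau-\theta_\tau\|\le 2\varepsilon\tau^2\|r_0\|$. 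The first conclusion $\|\theta_\tau-\theta_0\|\le\frac{R}{2}$ carries over verbatim.

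The one point requiring real care is confirming that the simplified radius $R\ge 2\|r_0\|(\sqrt T+2\varepsilon T^2)$ dominates the maximum appearing in Theorem~\ref{s2:t1}. I would bound the two arguments of that maximum separately after substitution: the first reduces to $1+2(\varepsilon^2+\varepsilon)T$ and the second to $\sqrt T+2\varepsilon T+(\varepsilon^2+\varepsilon)T^2$. Using $\varepsilon^2\le\varepsilon$ to replace $\varepsilon^2+\varepsilon$ by $2\varepsilon$, and the monotonicity $1\le\sqrt T\le T\le T^2$ in the regime $T\ge 1$ to fold the constant and linear terms into the quadratic one, shows that both arguments are controlled by $\sqrt T+2\varepsilon T^2$, so the stated radius suffices and \eqref{s2:t1:e1} applies for all $\tau\le T$.

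There is no substantial obstacle here, since all the analytic content already resides in Theorem~\ref{s2:t1}; the only work is the bookkeeping that verifies the cleaner radius and the rounded constants still dominate the exact expressions. I would therefore present the corollary compactly, displaying the substitution and emphasizing $\varepsilon^2\le\varepsilon$ as the single inequality driving every reduction.
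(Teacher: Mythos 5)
Your overall strategy—specializing Theorem~\ref{s2:t1} with $\beta=\gamma=\eta=1$, $\varepsilon_0=\varepsilon$, and using $\varepsilon^2\le\varepsilon$—is the paper's strategy, and the residual bound $\|\tilde r_\tau-r_\tau\|\le 2(\varepsilon^2+\varepsilon)\tau\|r_0\|\le 4\varepsilon\tau\|r_0\|$ goes through exactly as you say. However, there is a concrete gap in how you obtain the constant $2$ in $\|\tilde\theta_\tau-\theta_\tau\|\le 2\varepsilon\tau^2\|r_0\|$. Substituting into the \emph{stated} conclusion \eqref{s2:t1:e3} gives the bracket $2\varepsilon+(\varepsilon^2+\varepsilon)\tau\le 2\varepsilon(1+\tau)$, hence $\|\tilde\theta_\tau-\theta_\tau\|\le 2\varepsilon\tau(1+\tau)\|r_0\|$. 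Absorbing the lower-order term as you propose ($2\varepsilon\tau\le 2\varepsilon\tau^2$ for $\tau\ge1$) yields $4\varepsilon\tau^2\|r_0\|$, not $2\varepsilon\tau^2\|r_0\|$; no invocation of $\tau\le T$ repairs this, since you would need $2\varepsilon\tau\le 0$. The paper closes this by going back \emph{inside} the proof of Theorem~\ref{s2:t1} (Step~3), where the sum $\sum_{t=0}^{\tau-1}t=\tau(\tau-1)/2$ produces the sharper factor $(\tau-1)$ in place of $\tau$, so the bracket is $2\varepsilon+(\varepsilon^2+\varepsilon)(\tau-1)\le 2\varepsilon+2\varepsilon(\tau-1)=2\varepsilon\tau$ exactly, giving $2\varepsilon\tau^2\|r_0\|$. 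This is not cosmetic: the constant $2$ is used downstream (e.g.\ in Step~3 of the proof of Theorem~\ref{s2:t2}, where $2\varepsilon\tau^2\|r_0^\epsilon\|\le 4\varepsilon T^2\|y^\epsilon\|=\xi\|y^\epsilon\|$ is needed), so a factor of $4$ would propagate into the later bounds.

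The same looseness infects your verification of the radius condition. After substitution the second argument of the maximum in Theorem~\ref{s2:t1} is $\sqrt T+2\varepsilon T+2\varepsilon T^2$ (after $\varepsilon^2+\varepsilon\le 2\varepsilon$), and folding the $2\varepsilon T$ term into $2\varepsilon T^2$ again doubles the coefficient, so you only show that $R\ge 2\|r_0\|(\sqrt T+4\varepsilon T^2)$ suffices, not the stated $R\ge 2\|r_0\|(\sqrt T+2\varepsilon T^2)$. The fix is the same: use the refined bound $\|\tilde\theta_\tau-\theta_\tau\|\le 2\varepsilon\tau^2\|r_0\|$ inside Step~4 of the theorem's proof, so that $\|\theta_\tau-\theta_0\|\le\|\tilde\theta_\tau-\theta_0\|+\|\tilde\theta_\tau-\theta_\tau\|\le(\sqrt T+2\varepsilon T^2)\|r_0\|\le R/2$. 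In short: the corollary is not a black-box specialization of the theorem's conclusions; it requires reopening Step~3 of the theorem's proof to exploit the $(\tau-1)$ that was rounded up to $\tau$ in the theorem statement.
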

\begin{proof}
The corollary is direct from Theorem \ref{s2:t1}. Indeed, the bound on $\|r_\tau-\tilde r_\tau\|$ is already given in Theorem \ref{s2:t1}. The bound on $\|\tilde \theta_\tau-\theta_\tau\|$ follows from a small refinement of \textbf{Step 3} of the proof (with $\beta=\gamma=\eta=1$ and $\varepsilon=\varepsilon_0$) as 
\begin{equation*}
    \eta^{-1}\|\theta_\tau-\tilde\theta_\tau\| 
    \le \tau \|r_0\|\left(\varepsilon+\varepsilon_0+(\varepsilon_0^2+\varepsilon)(\tau-1)\right) \leq 2\epsilon\tau^2\|r_0\|.
\end{equation*}
With the given choice of $R$, the desired bound on $\|\theta_\tau-\theta_0\|$ follows analogously.
\end{proof}

\subsection{Verifying the assumptions for the convolutional generator}\label{ssec:conv-generator}
Now we turn back to the neural network generator $G(C)$ defined in \eqref{e0} with a fixed convolutional matrix $U\in\R^{n\times n}$ and $\|U\|,\|\Sigma(U)\|\le \beta =1$ as well as a constant step size $\eta=1$. Recall that
\begin{equation}\label{eqn:Sigma}
\Sigma(U)=\E\left[\mathcal{J}(C)\mathcal{J}(C)^\top\right]\in\mathbb{R}^{n\times n}, 
\end{equation}
{\color{black}where $\mathcal{J}(C)$ denotes the Jacobian of the convolutional generator $G(C)$, and the expectation $\mathbb{E}$ is taken with respect to the random neural network parameters $C$, whose entries are assumed to follow i.i.d. Gaussian with zero mean and variance $\omega^2$. The stochasticity of $C$ arises from the random initialization.}
Let $\delta>0$ and $0 < \xi \le \frac{1}{\sqrt{32\log\left(\frac{2n}{\delta}\right)}}$ be given, where $\delta$ is a tolerance parameter for a probability and $\xi$ is a tolerance parameter for the error. We first prove that for properly chosen $k\in\N$ and $\omega^2$, the neural network generator $G(C)$ in \eqref{e0} fulfills Assumptions \ref{a1}--\ref{a3}. That is, we can find a reference Jacobian $J$ with the designate approximation tolerance with high probability.
\begin{proposition}\label{s2:p1}
Let $\varepsilon, \varepsilon_0>0$. Then, for the choice
\begin{equation*}
k\ge \frac{16}{\varepsilon_0^4} \log\left(\frac{2n}{\delta}\right)
\quad \mbox{and}\quad
\omega = \frac{\xi \|y\|}{2\sqrt{8n\log\left(\frac{2n}{\delta}\right)}},
\end{equation*}
there exists a reference Jacobian $J\in\R^{nk\times k}$, such that for any $R\le \omega \tilde{R}$, with
$\tilde{R}:= \left(\frac{\varepsilon}{4}\right)^3\sqrt{k}\le \frac{1}{2}\sqrt{k}$,
Assumptions \ref{a1}, \ref{a2} and \ref{a3} are fulfilled with probability at least $1-\delta-n\exp(-\frac{\varepsilon^4}{2^9}k)$.
\end{proposition}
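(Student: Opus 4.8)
The plan is to compute the Jacobian of the two-layer generator explicitly and then verify the three assumptions separately, charging the failure probability $\delta$ to Assumption~\ref{a2} and $n\exp(-\tfrac{\varepsilon^4}{2^9}k)$ to Assumption~\ref{a3}. Writing the columns of $C$ as $c_1,\dots,c_k\in\R^n$ and recalling $v_j=\pm1/\sqrt k$, the generator splits as $G(C)=\sum_{j=1}^k v_j\,\mathrm{ReLU}(Uc_j)$, so its Jacobian with respect to the vectorized parameters is the block row $\mathcal{J}(C)=[\,v_1D_1U,\dots,v_kD_kU\,]$ with $D_j=\diag(\mathbf{1}[Uc_j>0])$ the diagonal matrix of ReLU derivatives; hence $\mathcal{J}(C)\mathcal{J}(C)^\top=\tfrac1k\sum_{j=1}^k D_jUU^\top D_j$ and $\Sigma(U)=\E[D\,UU^\top D]$ for a single Gaussian column. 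Assumption~\ref{a1} is then deterministic: since $\|D_j\|\le1$ and $\|U\|\le1$, each summand has norm at most one, so $\|\mathcal{J}(C)\|^2=\|\mathcal{J}(C)\mathcal{J}(C)^\top\|\le1$ for every $C$, while $\|J\|\le1$ will follow once $JJ^\top=\Sigma(U)$ because $\|\Sigma(U)\|\le1$.

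For Assumption~\ref{a2} I would proceed in two steps. First, $\mathcal{J}_0\mathcal{J}_0^\top=\tfrac1k\sum_j D_j^0UU^\top D_j^0$ is an average of $k$ i.i.d.\ random matrices, each of norm at most one and with mean $\Sigma(U)$; a matrix concentration inequality (or entrywise Hoeffding together with a union bound over the $n^2$ entries) yields $\|\mathcal{J}_0\mathcal{J}_0^\top-\Sigma(U)\|\le\varepsilon_0^2$ with probability at least $1-\delta$ exactly when $k\ge\tfrac{16}{\varepsilon_0^4}\log(\tfrac{2n}{\delta})$, which is the second inequality of Assumption~\ref{a2} and pins down the stated lower bound on $k$. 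Second, I construct the reference Jacobian $J$ (of the same shape as $\mathcal{J}_0$) from the polar decomposition $\mathcal{J}_0=HQ$ with $H=(\mathcal{J}_0\mathcal{J}_0^\top)^{1/2}$ and $Q$ a partial isometry, setting $J:=\Sigma(U)^{1/2}Q$ (using pseudo-inverses if $\mathcal{J}_0$ is rank deficient); then $JJ^\top=\Sigma(U)$ holds exactly, and the operator inequality $\|M^{1/2}-N^{1/2}\|\le\|M-N\|^{1/2}$ valid for positive semidefinite $M,N$ gives $\|\mathcal{J}_0-J\|=\|(H-\Sigma(U)^{1/2})Q\|\le\varepsilon_0$, the first inequality of Assumption~\ref{a2}.

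The main obstacle is Assumption~\ref{a3}, since it must hold uniformly over the whole ball $\|C-C_0\|\le R$. The difference $\mathcal{J}(C)-\mathcal{J}(C_0)$ is supported only on the coordinates $(i,j)$ where the ReLU activation flips, i.e.\ where the signs of $(Uc_j)_i$ and $(Uc_{0,j})_i$ disagree; bounding these selected rows by their Frobenius norm and using $\|u_i\|\le\|U\|\le1$ (with $u_i$ the $i$th row of $U$) reduces the operator norm to the total flip count $S$ through $\|\mathcal{J}(C)-\mathcal{J}(C_0)\|^2\le S/k$, so it suffices to ensure $S\le\tfrac{\varepsilon^2}{4}k$. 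A flip at $(i,j)$ forces $|(Uc_{0,j})_i|\le\|u_i\|\,\|c_j-c_{0,j}\|$, and because $(Uc_{0,j})_i\sim\mathcal N(0,\omega^2\|u_i\|^2)$, Gaussian anti-concentration bounds the flip probability by a multiple of $\|c_j-c_{0,j}\|/\omega$, with the row norm $\|u_i\|$ cancelling; summing over all $nk$ pairs under the Frobenius budget $\sum_j\|c_j-c_{0,j}\|^2\le R^2$ and applying Cauchy--Schwarz controls the expected flip count by a multiple of $n\sqrt k\,R/\omega$. The delicate part is to make this uniform in $C$ and to pass from expectation to high probability: I would exploit the monotonicity of the flip set in the perturbation size to reduce the uniform claim to the sphere and then apply a Bernstein/Hoeffding deviation bound to the independent flip indicators. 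Balancing the expected count, its fluctuation, and the threshold $\tfrac{\varepsilon^2}{4}k$ is precisely what forces the admissible radius $R\le\omega\tilde R$ with $\tilde R=(\varepsilon/4)^3\sqrt k$ and produces the deviation factor $n\exp(-\tfrac{\varepsilon^4}{2^9}k)$, the extra power of $\varepsilon$ coming from driving the fluctuation below the threshold; here $\omega$ enters only to fix the anti-concentration width. A union bound over the events of Assumptions~\ref{a2} and~\ref{a3} then yields the total probability $1-\delta-n\exp(-\tfrac{\varepsilon^4}{2^9}k)$.
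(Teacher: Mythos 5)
Your treatment of Assumptions \ref{a1} and \ref{a2} is sound and essentially reconstructs what the paper imports as black boxes (the paper simply cites \cite[Lemma 3, Lemma 5]{HeckelSoltanolkotabi:2020denoising} and \cite[Lemma 6.4]{OymakFabianLiSoltanolkotabi:2019}): the block-row form of $\mathcal{J}(C)$, the deterministic bound $\|\mathcal{J}(C)\mathcal{J}(C)^\top\|=\|\tfrac1k\sum_j D_jUU^\top D_j\|\le 1$, matrix concentration for the $k$ i.i.d.\ bounded summands, and the square-root operator inequality to produce $J$. One technicality in your construction of $J$: with $Q$ the partial isometry from the polar decomposition of $\mathcal{J}_0$, you get $JJ^\top=\Sigma(U)^{1/2}QQ^\top\Sigma(U)^{1/2}$, which equals $\Sigma(U)$ only if $QQ^\top$ acts as the identity on $\mathcal{R}(\Sigma(U)^{1/2})$; when $\mathcal{J}_0$ is rank deficient you must first extend $Q$ to a co-isometry (possible since $nk\ge n$), which is exactly what the cited Oymak et al.\ lemma does.

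The genuine gap is in Assumption \ref{a3}. Your reduction $\|\mathcal{J}(C)-\mathcal{J}(C_0)\|^2\le S/k$ through the \emph{total} flip count $S$ is correct but too lossy by a factor of $n$, and the stated radius $\tilde R=(\varepsilon/4)^3\sqrt k$ cannot be recovered from it: your own estimate gives $\E[S]\lesssim n\sqrt k\,R/\omega= n k(\varepsilon/4)^3$, which exceeds the threshold $S\le\tfrac{\varepsilon^2}{4}k$ unless $n\varepsilon\lesssim 16$. The bound that matches the claimed constants controls the \emph{maximum per-row} flip count: writing $F^i=\{j:\text{flip at }(i,j)\}$, Cauchy--Schwarz applied row by row gives $\|(\mathcal{J}(C)-\mathcal{J}(C_0))w\|^2\le\tfrac{\max_i|F^i|}{k}\sum_j\|Uw_j\|^2\le\tfrac{\max_i|F^i|}{k}\|U\|^2\|w\|^2$, so only $\max_i|F^i|\le\tfrac{\varepsilon^2}{4}k$ is needed, and the prefactor $n$ in the failure probability $n\exp(-\tfrac{\varepsilon^4}{2^9}k)$ is precisely the union bound over the $n$ rows. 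Relatedly, your passage from an expected flip count to a bound uniform over the ball $\|C-C_0\|_F\le R$ via ``monotonicity plus Bernstein on independent indicators'' does not work as stated: the adversarial perturbation concentrates its budget on the columns $j$ where $|(u_i,c_{0,j})|$ is smallest, so the correct uniform statement is an order-statistics one --- per row, the worst-case flip count is the largest $m$ such that the sum of the $m$ smallest values of $(u_i,c_{0,j})^2/\|u_i\|^2$ stays below $R^2$, and since these order statistics grow like $\omega\,l/k$ this sum is $\gtrsim \omega^2m^3/k^2$, forcing $m\lesssim(k\tilde R)^{2/3}=\tfrac{\varepsilon^2}{4}k$. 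This is the content of \cite[Lemma 7]{HeckelSoltanolkotabi:2020denoising} that the paper invokes, and it is structurally different from a fixed-perturbation expectation bound followed by concentration.
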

\begin{proof}
First, by the concentration lemma \cite[Lemma 3, p. 24]{HeckelSoltanolkotabi:2020denoising}, for a Gaussian matrix $C\in\R^{n\times k}$ with centered i.i.d. entries and variance $\omega^2$, there holds
\begin{equation*}
\|\mathcal{J}(C)\mathcal{J}(C)^\top  - \Sigma(U) \| \le \|U\|^2\sqrt{\log\left(\frac{2n}{\delta}\right) \sum_{\ell=1}^k v_\ell^4},
\end{equation*}
with probability at least $1-\delta$. {\color{black}That is, the random variable $\mathcal{J}(C)\mathcal{J}(C)^\top$ is close to its expectation $\Sigma(U)$ with high probability.} By the choice of  $k$ (and the condition $\|U\|\le1$), we deduce
\begin{equation*}
\|U\|^2 \sqrt{\log\left(\frac{2n}{\delta}\right)\sum_{\ell=1}^kv_\ell^4} \le \beta^2 \sqrt{\log\left(\frac{2n}{\delta}\right)\sum_{j=1}^k \frac{1}{k^2}} \le \frac{\varepsilon_0^2}{4}.
\end{equation*}
Thus, with probability at least $1-\delta$, \cite[Lemma 6.4, p. 21]{OymakFabianLiSoltanolkotabi:2019} guarantees that there exists a matrix $J\in\R^{n{\color{black} k}\times k}$ such that
$\Sigma(U) = J J^\top $ and $\|\mathcal{J}(C_0) - J\|\le \varepsilon_0.$
Therefore, the choice of $J$ fulfills Assumption \ref{a2} with probability at least $1-\delta$. Next, by \cite[Lemma 5]{HeckelSoltanolkotabi:2020denoising}, we have
\begin{equation*}
\|\mathcal{J}(C)\|\le \|v\| \|U\|\qquad\mbox{and}\qquad \|J\|\le \|v\|\|U\|,
\end{equation*}
and since $\|v\|=1$ and $\|U\|\le 1$, we deduce that also Assumption \ref{a1} is fulfilled. Last, we prove the validity of Assumption \ref{a3} using \cite[Lemma 7, p. 25]{HeckelSoltanolkotabi:2020denoising}, which states that for all $C$ which fulfill
$ \|C-C_0\|\le \omega \tilde{R}$ with $ \tilde{R}\le \frac{\sqrt{k}}{2}$,
with probability at least $1-n\exp (-\frac{1}{2}\tilde{R}^\frac{4}{3} k^\frac{1}{3})$, the following estimate holds
\begin{equation*}
\|\mathcal{J}(C) - \mathcal{J}(C_0)\|\le \|v\|_\infty 2(k\tilde{R})^\frac{1}{3}\|U\|\le \frac{1}{\sqrt{k}} 2 \left(k\left(\frac{\varepsilon}{4}\right)^3 \sqrt{k}\right)^\frac{1}{3} = \frac{\varepsilon}{2}.
\end{equation*}
Thus Assumption \ref{a3} is also fulfilled.  Finally, the preceding statements show that with probability at least
$1-\delta- n\exp(-\frac{\tilde{R}^\frac{4}{3}k^\frac{1}{3}}{2}) = 1-\delta -n\exp(-\frac{\varepsilon^4}{2^9}k)$,
Assumptions \ref{a1}--\ref{a3} are fulfilled.
\end{proof}

\begin{remark}
\cite[Lemma 7, p. 25]{HeckelSoltanolkotabi:2020denoising} states that the probability is at least $1-ne^{-\frac{1}{2}\tilde{R}^\frac{4}{3} k^\frac{7}{3}}$, which is incorrect. Indeed, in \cite[eq. (43)]{HeckelSoltanolkotabi:2020denoising}, Hoeffding's inequality was not applied correctly: the argument of the function $\exp(\cdot)$ should be $\frac{m^2}{2k}$ instead of $\frac{km^2}{2}$. However, this just changes the probability slightly and does not influence the overall correctness of the final result.
\end{remark}

\subsection{General error bound for the convolutional generator}\label{ssec:general-bound} We are now ready to state and prove an independent main result, given in Theorem \ref{s2:t2}. This result gives a general error bound on the approximation by the convolutional neural network generator $G(C)$. Recall that in Section \ref{ssec:conv-generator}, the reference Jacobian $J\in \mathbb{R}^{n{\color{black} k}\times k}$ is chosen such that
\begin{equation}\label{eqn:ref-Jacobian}
JJ^\top =\Sigma(U)=\E\left[\mathcal{J}(C)\mathcal{J}(C)^\top\right]\in\R^{n\times n}.
\end{equation}
It was shown in Heckel and Soltanolkotabi
\cite[Appendix H, p. 27] {HeckelSoltanolkotabi:2020denoising} that
\begin{equation*}
\Sigma(U)=\left(\frac{(u_i,u_j)}{2}\left(1-\frac{1}{\pi}\cos^{-1}\left(\frac{(u_i,u_j)}{\|u_i\|\|u_j\|}\right)\right)\right)_{ij}\in\R^{n\times n},
\end{equation*}
where $u_i$ denotes the $i$th row of the circulant matrix $U\in\R^{n\times n}$.
Hence $\Sigma(U)$ only depends on $U$. In particular, it is independent of $k$, the width of the convolutional neural network $G(C)$. We denote by $(\sigma_i^2,w_i)$ {\color{black} and $(\sigma_j'^2,w_j')$ the non-zero eigenvalues and orthonormal eigenvectors of the positive semi-definite  matrices $\Sigma(U)\in\mathbb{R}^{n\times n}$ and} $A\Sigma(U) A^\top\in\R^{m\times m}$, respectively:
\begin{align*}
    \Sigma(U) w_i &= \sigma_i^2 w_i,\quad i=1,\ldots,{\color{black} n'},\\
    A\Sigma(U)A^\top w_j' &= \sigma_j'^2w_j',\quad j=1,\ldots, {\color{black} m'},
\end{align*}
{\color{black} for some $n'\le n, m'\le m$.} Again we emphasise that they are independent of $k$. Accordingly, the (economic) singular value decompositions (SVDs) of $J$ and $AJ$ are given by $(\sigma_i,z_{i,k},w_i)$ and $(\sigma_j',z_{j,k}',w_j')$, i.e.,
\begin{align*}
    J z_{i,k} &= \sigma_i w_i ,\quad i=1,\ldots,n',\qquad \mbox{or } J = \sum_{i=1}^{n'}\sigma_i w_i z_{i,k}^\top,\\
    AJ z_{j,k}'  &= \sigma_j' w_j', \quad j =1,\ldots,m',\qquad \mbox{or }AJ = \sum_{j=1}^{m'} \sigma_j'w_j'z_{j,k}'^\top,
\end{align*}
where the right singular vectors $z_{i,k}$ and $z_{j,k}'$ can be defined respectively by
\begin{align}\label{eqn:svd}
    z_{i,k}:=J^\top w_i/\sigma_i\quad\mbox{and} \quad z_{j,k}':=J^\top A^\top w_j'/\sigma_j'.
\end{align}
Clearly the right singular vectors $(z_{i,k})_{i=1}^{n'}$ and $(z_{j,k}')_{j=1}^{m'}$ do depend on $k$, and hence they are explicitly indicated by the subscript $k$. Below we employ the SVDs of $J$ and $AJ$ frequently. Here the notation $P_{\mathcal{R}(J)}$ denotes the orthogonal projection into the range space $\mathcal{R}(J)$ of $J$ (so it differs slightly from the projection into the affine space $\mathcal{R}(\tilde G)$ of the linearized generator $\tilde G$, due to the presence of a nonzero $C_0$). The orthogonal projection $P_{\mathcal{R}(J)}$ is explicitly given by 
\begin{equation}\label{eqn:proj-J}
    P_{\mathcal{R}(J)} x = \sum_{i=1}^{n'} (w_i,x)w_i.
\end{equation}

\begin{theorem}\label{s2:t2}Let $\|A\| \le \gamma=1$, $x^\dag=A^\dag y$ be the minimum norm solution to problem \eqref{eqn:lin}, and $U\in\R^{n\times n}$ a fixed matrix with $\|U\|\le\beta=1$.
Let  $\eta=1$ be the constant step size and $4\le T \le \frac{1}{ 2\varepsilon^2}$ be the maximal iteration number. Fix $\delta>0$ and $0<\xi\le \frac{1}{\sqrt{32\log\left(\frac{2n}{\delta}\right)}}$, {\color{black}and set $\varepsilon:= \frac{\xi}{4T^2}$}. Consider a convolutional generator $G(C)$ as in \eqref{e0} with
\begin{equation}\label{eqn:k}
 k\ge 2^{35} \xi^{-8}n\log\left(\frac{2n}{\delta}\right)T^{13}.
\end{equation}
Let $(G(C_\tau^\epsilon))_{\tau}$ be the iterates of a gradient descent method with an initial weight matrix $C_0\in\R^{n\times k}$, following i.i.d. $\mathcal{N}(0,\omega^2)$, with the standard deviation
\begin{equation}\label{eqn:omega-t2}
    \omega:=\frac{\xi\|y^\epsilon\|}{2\sqrt{8n\log\left(\frac{2n}{\delta}\right)}}.
\end{equation}
Then, with probability at least $1-3\delta$, the following error bound holds for all $\tau\le T$,
\begin{align*}
\|x^\dag-G(C_\tau^\epsilon)\|&\le \sqrt{\sum_{i=1}^{n'} \sigma_i^2 \left(\sum_{j=1}^{m'} \frac{1-(1-\eta\sigma_j'^2)^\tau}{\sigma_j'}(w_j',y^\epsilon-y)(z_{j,k}',z_{i,k})\right)^2 }\\
&\qquad + \sqrt{\sum_{i=1}^{n'}\left(\sigma_i \sum_{j=1}^{m'}\frac{1-(1-\sigma_j'^2)^\tau}{\sigma_j'}(A^\top w_j',x^\dag)(z_{j,k}',z_{i,k}) - (x^\dag,w_i)\right)^2}\\
&\qquad + \sqrt{\sum_{i=1}^{n'}\sigma_i^2\left(\sum_{j=1}^{m'} \left(\frac{1-(1-\sigma_j'^2)^\tau}{\sigma_j'}(A^\top w_j',G(C_0))\right) (z_{j,k}',z_{i,k})\right)^2}\\
&\qquad + 3\xi\|y^\epsilon\|+ \|(I-P_{\mathcal{R}(J)})x^\dag\|.
\end{align*}
\end{theorem}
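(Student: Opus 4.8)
The plan is to reduce the nonlinear gradient descent dynamics to the linearized ones and then solve the linearized iteration explicitly in the shared spectral basis. First I would instantiate the abstract framework of Section \ref{ssec:gen-nonlin} with $f=G$, $\theta=\mathrm{vec}(C)$, $\theta_0=\mathrm{vec}(C_0)$ and data $y^\epsilon$, and apply Proposition \ref{s2:p1} with the coupled choice $\varepsilon_0=\varepsilon=\frac{\xi}{4T^2}$ (which also satisfies $\varepsilon\le 1$ and $T\le\frac{1}{2\varepsilon^2}$). With the lower bound \eqref{eqn:k} on $k$, the failure term $n\exp(-\tfrac{\varepsilon^4}{2^9}k)$ is dominated by $\delta$, so Assumptions \ref{a1}--\ref{a3} hold with $\beta=1$ and a reference Jacobian $J$ obeying $JJ^\top=\Sigma(U)$ on an event of probability at least $1-2\delta$. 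On a further event of probability at least $1-\delta$, a Gaussian concentration estimate for the random initialization—calibrated by the choice of $\omega$ in \eqref{eqn:omega-t2}—gives $\|G(C_0)\|\le \xi\|y^\epsilon\|$, hence $\|r_0\|=\|AG(C_0)-y^\epsilon\|\le 2\|y^\epsilon\|$. Before invoking Corollary \ref{s2:c1} I would check the window for the radius is nonempty: choosing $R=2\|r_0\|(\sqrt T+2\varepsilon T^2)$ meets the Corollary's lower requirement, and $R\le\omega\tilde R=\omega(\varepsilon/4)^3\sqrt k$ follows from \eqref{eqn:k} and \eqref{eqn:omega-t2}, which is exactly where the constant $2^{35}$ and the exponent $T^{13}$ are pinned down.

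The second step is the closed-form solution of the linearized iteration. Writing $\tilde G_\tau:=f(\theta_0)+J(\tilde\theta_\tau-\theta_0)=G(C_0)+J(\tilde\theta_\tau-\theta_0)$ for the linearized generator output, I would substitute the representation \eqref{eqn:theta-tilde}, diagonalize $AJJ^\top A^\top$ in the basis $(w_j')$, and use the SVDs \eqref{eqn:svd}, namely $J^\top A^\top w_j'=\sigma_j'z_{j,k}'$ and $Jz_{j,k}'=\sum_i\sigma_i w_i(z_{i,k},z_{j,k}')$, to collapse the filtered residual to
\[
J(\tilde\theta_\tau-\theta_0)=-\sum_{i=1}^{n'}\sigma_i w_i\sum_{j=1}^{m'}\frac{1-(1-\sigma_j'^2)^\tau}{\sigma_j'}(w_j',r_0)(z_{i,k},z_{j,k}').
\]
Then, using $y=Ax^\dag$, I would split $(w_j',r_0)=(A^\top w_j',G(C_0))-(A^\top w_j',x^\dag)-(w_j',y^\epsilon-y)$, producing precisely the three summands that become the first three square-root terms of the bound.

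The third step is the error decomposition $\|x^\dag-G(C_\tau^\epsilon)\|\le \|x^\dag-\tilde G_\tau\|+\|\tilde G_\tau-G(C_\tau^\epsilon)\|$. For the linearization gap I would use a mean-value Jacobian $\bar S_\tau$ along the segment from $\theta_0$ to $\theta_\tau$ (so $G(C_\tau^\epsilon)-G(C_0)=\bar S_\tau(\theta_\tau-\theta_0)$ with $\|\bar S_\tau-J\|\le\tfrac32\varepsilon$ by Assumptions \ref{a2}--\ref{a3}), yielding $\|\tilde G_\tau-G(C_\tau^\epsilon)\|\le \|\bar S_\tau-J\|\,\|\theta_\tau-\theta_0\|+\|J\|\,\|\theta_\tau-\tilde\theta_\tau\|$; the Corollary \ref{s2:c1} bounds $\|\theta_\tau-\theta_0\|\le R/2$ and $\|\theta_\tau-\tilde\theta_\tau\|\le 2\varepsilon\tau^2\|r_0\|$, together with $\varepsilon=\frac{\xi}{4T^2}$ and $\|r_0\|\le 2\|y^\epsilon\|$, reduce this to a small multiple of $\xi\|y^\epsilon\|$. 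For $\|x^\dag-\tilde G_\tau\|$ I would split orthogonally by $P_{\mathcal R(J)}$ as in \eqref{eqn:proj-J}: the component in $\mathcal R(J)^\perp$ equals $(I-P_{\mathcal R(J)})(x^\dag-G(C_0))$, giving the final term $\|(I-P_{\mathcal R(J)})x^\dag\|$ plus an $\|G(C_0)\|$ remainder, while the component in $\mathcal R(J)$, after the triangle inequality across the three pieces of $(w_j',r_0)$, gives exactly the first three terms plus a leftover $P_{\mathcal R(J)}G(C_0)$. Collecting the two $G(C_0)$ remainders with the linearization gap and using $\|G(C_0)\|\le\xi\|y^\epsilon\|$ produces the $3\xi\|y^\epsilon\|$ term, and a union bound gives total probability at least $1-3\delta$.

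The main obstacle I anticipate is not the spectral algebra—which is mechanical once the reduction is secured—but the simultaneous consistency of all parameter choices. One must fix $\varepsilon,\varepsilon_0,\omega,R$ and $k$ so that Corollary \ref{s2:c1} (requiring $R$ above a $\|r_0\|$-dependent threshold and $T\le\frac{1}{2\varepsilon^2}$) and Proposition \ref{s2:p1} (capping $R\le\omega\tilde R$ and demanding $k\ge\frac{16}{\varepsilon_0^4}\log\frac{2n}{\delta}$) hold at once, while forcing every leftover quantity—$\|G(C_0)\|$, the linearization gap, and the projected initialization $P_{\mathcal R(J)}G(C_0)$—below $\xi\|y^\epsilon\|$. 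Tracking the constant $2^{35}$ and the exponent $T^{13}$ through these competing inequalities, rather than the SVD bookkeeping, is where the real care is required.
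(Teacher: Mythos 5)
Your proposal is correct and follows essentially the same route as the paper's proof: the same consistency checks on $k$, $\omega$, $R$, $\varepsilon$ via Proposition \ref{s2:p1} and Corollary \ref{s2:c1}, the same concentration bound $\|G(C_0)\|\le\xi\|y^\epsilon\|$, the same split into linearization gap plus linearized error, and the same SVD diagonalization with the three-way decomposition of the initial residual (the paper merely performs this split in two stages, first separating $y^\epsilon$ from $y$). The only caveat is bookkeeping: your orthogonal splitting leaves \emph{two} $G(C_0)$ remainders, $P_{\mathcal{R}(J)}G(C_0)$ and $(I-P_{\mathcal{R}(J)})G(C_0)$, which if each is bounded crudely by $\|G(C_0)\|$ pushes the total slightly past $3\xi\|y^\epsilon\|$; either use $\|P G(C_0)\|+\|(I-P)G(C_0)\|\le\sqrt{2}\,\|G(C_0)\|$ or, as the paper does, keep $G(C_0)$ as a single unprojected vector contributing one $\xi\|y^\epsilon\|$.
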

 \begin{proof} 
The proof is lengthy, and we divide it into four steps. We define
\begin{align}\label{eqn:para-choice}
 \varepsilon_0:=\min\left(\left(\frac{16\log\left(\frac{2n}{\delta}\right)}{k}\right)^\frac{1}{4}, \varepsilon\right)\quad \mbox{and}\quad
 R:=8\sqrt{T} \|y^\epsilon\|,
 \end{align}
and then apply Proposition \ref{s2:p1} and Corollary \ref{s2:c1}.

\medskip
\noindent \textbf{Step 1: Verify the condition $R\le \omega \left(\frac{\varepsilon}{4}\right)^3 \sqrt{k}$.} This is one key condition for applying Proposition \ref{s2:p1}. Indeed, by the definitions of $R$ and $\omega$ and the choice of $k$ in \eqref{eqn:k}, we have
\begin{align*}
\frac{R}{\omega \sqrt{k}}&\!\!=\!\! \frac{16\sqrt{8n\log\left(\frac{2n}{\delta}\right)}\sqrt{T}}{\xi} \frac{1}{\sqrt{k}}
\leq \frac{2^4\sqrt{8n\log\left(\frac{2n}{\delta}\right)}\sqrt{T}}{\xi} \frac{\xi^4}{2^{16}\sqrt{8n\log\left(\frac{2n}{\delta}\right)}T^{\frac{13}{2}}}\!\\
&=\!\frac{1}{2^6}\left(\frac{\xi}{4T^2}\right)^3\!\!\!= \Big(\frac{\varepsilon}{4}\Big)^3,
\end{align*}
where the last step is due to the definition of $\varepsilon$.
Moreover, since $0<\xi<1$, by the choice of the neural network width $k$ and the definition of $\varepsilon$,
\begin{align*}
k\ge \frac{2^{35}n\log\left(\frac{2n}{\delta}\right)T^{13}}{\xi^8} &\ge \frac{2^{11}\log\left(\frac{2n}{\delta}\right)T^8}{\xi^4}
= \frac{ 2^3\log\left(\frac{2n}{\delta}\right) 4^4 T^8}{\xi^4}
=\frac{ 2^3\log\left(\frac{2n}{\delta}\right)}{\varepsilon^4}\ge\frac{ 2^3\log\left(\frac{2n}{\delta}\right)}{\varepsilon_0^4}.
\end{align*}
Thus, by Proposition \ref{s2:p1}, the choices of $k$, $\omega$ and $\eta$ ensure that Assumptions \ref{a1}, \ref{a2} and \ref{a3} are fulfilled with probability at least
$1-\delta -n\exp\left(-\frac{\varepsilon^4}{2^9}k\right) \ge 1-2\delta$.\\
\noindent \textbf{Step 2: Verifying the assumptions of Theorem \ref{s2:t1}.}
To employ Theorem \ref{s2:t1} (or Corollary \ref{s2:c1}), next we verify the above specific choices for the assumptions of Theorem \ref{s2:t1}, especially the condition that the chosen $R$ is indeed sufficiently large. To this end, we first establish a bound on the initial residual $\|r_0\|$ and on the initial estimate $\|G(C_0)\|$. Actually, \cite[Lemma 6, p. 25]{HeckelSoltanolkotabi:2020denoising} states that with probability at least $1-\delta$,
\begin{equation*}
\|G(C_0)\| \le \omega \sqrt{8 \log\left(\frac{2n}{\delta}\right)}\|U\|_F,
\end{equation*}
and by the choice of $\omega$ in \eqref{eqn:omega-t2} and using the elementary inequality $\|U\|_F\le \sqrt{n}\|U\|$ and the condition $\|U\|=1$, we deduce
\begin{equation}\label{p1:e3}
\|G(C_0)\| \le\omega\sqrt{8n\log\left(\frac{2n}{\delta}\right)} \le \xi \|y^\epsilon\|.
\end{equation}
This, the triangle inequality and the condition $\|A\|=1$ yield
\begin{equation*}
\|r_0^\epsilon\|\le \|AG(C_0) - y^\epsilon\|\le \|y^\epsilon\| + \|A\|\|G(C_0)\|\le \left(1+\xi\right)\|y^\epsilon\|\le 2\|y^\epsilon\|,
\end{equation*}
with probability at least $1-\delta$. Therefore, by the choice of $\xi$ and $T$, we have
\begin{align*}
 2\|r_0^\epsilon\|(\sqrt{T}+2\varepsilon T^2) = 2\|r_0^\epsilon\|(\sqrt{T}+\tfrac{\xi}{2}) \leq 4\|r_0^\epsilon\|\sqrt{T}\le 8\|y^\epsilon\|\sqrt{T},
\end{align*}
 with probability at least $1-\delta$ and consequently,
$R=8\sqrt{T}\|y^\epsilon\|\ge 2\|r_0^\epsilon\|(\sqrt{T}+2\varepsilon T^2)$.
By the definition of $\varepsilon$,
$T\le \frac{1}{2\varepsilon^2}$
and thus the assumptions of Theorem \ref{s2:t1} (for $y^\epsilon$) are fulfilled with probability at least $1-3\delta$. Thus, we may employ the estimates in Corollary \ref{s2:c1}.

\medskip
\noindent\textbf{Step 3: Bound the errors of linearized and nonlinear iterates.} We denote by $C^\epsilon_\tau$ and $\tilde{C}^\epsilon_\tau$ the iterates for noisy data $y^\epsilon$, and by $C_\tau$ and $\tilde{C}_\tau$ the ones for exact data $y$. Similarly, we denote by $r_\tau^\epsilon$ and $\tilde{r}_\tau^\epsilon$ the residuals for noisy data $y^\epsilon$, and by $r_\tau$ and $\tilde{r}_\tau$ the corresponding residuals for the exact data $y$. For the linearized neural network $\tilde{G}(C)$, we write
$\tilde{G}(C):=G(C_0) + J (c-c_0)$, where $c\in\R^{nk}$ is the vectorized version of $C\in\R^{n\times k}$ (and similarly for $c_0$ and $c_\tau^\epsilon$ etc.), and $J$ is the reference Jacobian defined in \eqref{eqn:ref-Jacobian}. First, we decompose the nonlinear estimation error $\|G(C^\epsilon_\tau)-x^\dag\|$ into
\begin{align*}
\|G(C^\epsilon_\tau)-x^\dag\|&\le \|G(C^\epsilon_\tau)-\tilde{G}(\tilde{C}^\epsilon_\tau)\| + \|\tilde{G}(\tilde{C}^\epsilon_\tau) - x^\dag\|.
\end{align*}
For the first term, we have
\begin{align*}
& \|G(C^\epsilon_\tau) - \tilde{G}(\tilde{C}^\epsilon_\tau)\|\le \|G(C^\epsilon_\tau) - \tilde{G}(C^\epsilon_\tau)\| + \|\tilde{G}(C^\epsilon_\tau) - \tilde{G}(\tilde{C}^\epsilon_\tau)\|\\
=&\|G(C^\epsilon_\tau) - \left(G(C_0) + J(c^\epsilon_\tau - c_0)\right)\| + \|G(C_0)+J(c_\tau^\epsilon-c_0) -(G(C_0) + J(\tilde{c}^\epsilon_\tau-c_0)\|\\
=&\|G(C^\epsilon_\tau) - Jc_\tau^\epsilon - \left(G(C_0) - Jc_0\right)\| + \|J(c_\tau^\epsilon - \tilde{c}^\epsilon_\tau)\|.
\end{align*}
Note that the Jacobian of the mapping $C\mapsto G(C) - Jc$ is given by $\mathcal{J}(C)-J$. By the intermediate value theorem, Assumption \ref{a3} and Corollary \ref{s2:c1}, we have the bound 
\begin{align*}
\|G(C^\epsilon_\tau) - J c_\tau^\epsilon - \left(G(C_0) - Jc_0\right)\|
 \le \sup_{\xi\in\overline{\rm conv}(C_0,C^\epsilon_\tau)}\|\mathcal{J}(\xi)-J\|\|c_\tau^\epsilon-c_0\|\le \varepsilon \tfrac{R}{2}.
\end{align*}
For the term $\|J(c_\tau^\epsilon - \tilde{c}^\epsilon_\tau)\|$, using Corollary \ref{s2:c1} again, we have
\begin{equation*}
\|J(c_\tau^\epsilon - \tilde{c}^\epsilon_\tau)\| \le \|J\|\|c^\epsilon_\tau-\tilde{c}^\epsilon_\tau\|\le  2\varepsilon \tau^2 \|r_0^\epsilon\|.
\end{equation*}
By the choice of the parameter $R$ in \eqref{eqn:para-choice} and the definition of and $\varepsilon$  (i.e., $4\leq T$ and $\varepsilon=\frac{\xi}{4T^2}$), we obtain
\begin{align*}
\varepsilon R= \varepsilon 8\sqrt{ T}\|y^\epsilon\| = \frac{2\xi}{T^\frac{3}{2}} \|y^\epsilon\|\le \xi \|y^\epsilon\|\quad \mbox{and}\quad
2\varepsilon \tau^2 \|r_0^\epsilon\|\le 4\varepsilon T^2 \|y^\epsilon\|\le \xi \|y^\epsilon\|.
\end{align*}
Hence, we obtain 
$\|G(C_\tau^\epsilon)-\tilde G(\tilde C_\tau^\epsilon)\|\leq \tfrac{3}{2}\xi\|y^\epsilon\|$.

\medskip
\noindent\textbf{Step 4: Bound the linear approximation error.}
Now, for the linear approximation error $\tilde{G}(\tilde{C}^\epsilon_\tau) - x^\dag$, we decompose it further into three terms
\begin{align*}
&\|\tilde{G}(\tilde{C}^\epsilon_\tau) - x^\dag\|\le \|\tilde{G}(\tilde{C}^\epsilon_\tau) - \tilde{G}(\tilde{C}_\tau)\| + \|\tilde{G}(\tilde{C}_\tau) - x^\dag\|\\
\leq&\|\tilde{G}(\tilde{C}^\epsilon_\tau) - \tilde{G}(\tilde{C}_\tau)\| + \|\tilde{G}(\tilde{C}_\tau) - P_{\mathcal{R}(J)}x^\dag\|+\|(I-P_{\mathcal{R}(J)})x^\dag\|.
\end{align*}
The three terms represent the noise propagation error (for the linearized model, due to the presence of data noise), the approximation error (depending on $\tau$) and the projection error (due to the limited expressivity of the representation using $\tilde G$ or $J$).
Next we estimate the first two terms using the SVDs of $J$ and $AJ$. From the representations of the linear approximations $\tilde c_\tau$ and $\tilde c_\tau^\epsilon$ and the SVD of $AJ$
\begin{align*}
\tilde{c}_\tau &= c_0 -\sum_{j=1}^{m'} \frac{1-(1-\sigma_j'^2)^\tau}{\sigma_j'}(r_0,w_j')z_{j,k}'\quad\mbox{and}\quad
\tilde{c}_\tau^\epsilon = c_0 -\sum_{j=1}^{m'} \frac{1-(1-\sigma_j'^2)^\tau}{\sigma_j'}(r_0^\epsilon,w_j')z_{j,k}',
\end{align*}
cf. \eqref{eqn:theta-tilde}, and the SVDs of $AJ$ and $J$, it follows that the noise propagation error
$\tilde{G}(\tilde{C}^\epsilon_\tau) - \tilde{G}(\tilde{C}_\tau)$ can be represented by
\begin{align*}
\tilde{G}(\tilde{C}^\epsilon_\tau) - \tilde{G}(\tilde{C}_\tau)&=-\sum_{j=1}^{m'}\frac{1-(1-\eta\sigma_j'^2)^\tau}{\sigma_j'}(w_j',r^\epsilon_0-r_0) J z_{j,k}' \\
&=-\sum_{j=1}^{m'}\frac{1-(1-\eta\sigma_j'^2)^\tau}{\sigma_j'}(w_j',y^\epsilon-y) \sum_{i=1}^{n'} (z_{j,k}',z_{i,k}) \sigma_i w_i\\
&=-\sum_{i=1}^{n'} \sigma_i \left(\sum_{j=1}^{m'} \frac{1-(1-\eta\sigma_j'^2)^\tau}{\sigma_j'}(w_j',y^\epsilon-y)(z_{j,k}',z_{i,k})\right) w_i.
\end{align*}
Last, we bound the linearized approximation error $\tilde{G}(\tilde{C}_\tau)-\mathcal{P}_{\mathcal{R}(J)}x$. Using the defining relations $r_0=AG(C_0)-y$ and $\tilde G(\tilde C_\tau)= G(C_0)+J(\tilde c_\tau-c_0)$, the SVDs of $AJ$ and $J$, and the expression for $\mathcal{P}_{\mathcal{R}(J)}$ in \eqref{eqn:proj-J}, we derive
\begin{align*}
\tilde{G}(\tilde{C}_\tau)-\mathcal{P}_{\mathcal{R}(\tilde{G})}x^\dag =&G(C_0)-\sum_{j=1}^{m'} \frac{1-(1-\sigma_j'^2)^\tau}{\sigma_j'}(w_j',r_0)Jz_{j,k}' - \mathcal{P}_{\mathcal{R}(J)}x^\dag\\
=&G(C_0) -\sum_{j=1}^{m'} \frac{1-(1-\sigma_j'^2)^\tau}{\sigma_j'}(A^\top w_j',G(C_0))Jz_{j,k}' \\
  &+ \sum_{j=1}^{m'} \frac{1-(1-\sigma_j'^2)^\tau}{\sigma_j'}(w_j',y)Jz_{j,k}' - \sum_{i=1}^{n'} (x^\dag,w_i) w_i.
\end{align*}
Now substituting the SVD of $J$ leads to
\begin{align*}
\tilde{G}(\tilde{C}_\tau)-\mathcal{P}_{\mathcal{R}(J)}x^\dag=&G(C_0)-\sum_{i=1}^{n'}\sigma_i\left(\sum_{j=1}^{m'} \left( \frac{1-(1-\sigma_j')^\tau}{\sigma_j'}(A^\top w_j',G(C_0))\right) (z_{j,k}',z_{i,k})\right) w_i\\
&+ \sum_{i=1}^{n'}\left(\sigma_i \sum_{j=1}^{m'}\frac{1-(1-\sigma_j'^2)^\tau}{\sigma_j'}(A^\top w_j',x^\dag)(z_{j,k}',z_{i,k}) - (x^\dag,w_i)\right) w_i .
\end{align*}
Combining the preceding identities with the orthonormality of the singular vectors $w_i$ completes the proof of the theorem.
\end{proof}

\begin{remark}
Theorem \ref{s2:t2} indicates that the representation via $J$ or $G$ has a major impact on the error analysis, through the interaction terms $(z_{j,k}',z_{i,k})$. Undoubtedly, this greatly complicates the error analysis of the synthesis approach in the general case, and a suitable alignment between the right singular vectors $z_{j,k}'$ and  $z_{i,k}$ is needed in order to simplify the analysis. This strategy is followed in Theorem \ref{t0} and will be discussed in Section \ref{s3}. 
\end{remark}

\section{The proof of Theorem \ref{t0}}\label{s3}
Now we give the proof of Theorem \ref{t0}. There is a major difference to the classical analysis from the inverse problems literature \cite{EnglHankeNeubauer:1996} due to the uncommon representation of $x$ in another basis $J$ or $G$, and the presence of the random initial iterate $C_0$ and $G(C_0)$. Nonetheless, the discrepancy principle \eqref{eqn:dp} can still be analyzed classically and is optimal uniformly over the source condition in the sense of worst-case-error.
\begin{proof}[Proof of Theorem \ref{t0}]
The proof uses Theorem \ref{s2:t2} with the parameter setting
\begin{align*}
\xi_\epsilon&=\min\left(\frac{(L-1)\epsilon}{4\|y^\epsilon\|},\frac{1}{\sqrt{32\log\left(\frac{2n}{\delta_\epsilon}\right)}}\right),\\
 T_\epsilon&\ge\max\left(\left(\frac{2\rho}{(L-1)\epsilon}\right)^{\frac{2(p+q)}{q(1+\nu)}}\frac{q(1+\nu)C_A^\frac{q+p}{q}}{2e(p+q)c_A c_\Sigma}, \frac{4 \|y^\epsilon\|}{(L-1)\epsilon}\right) \quad\mbox{and}\quad \varepsilon_\epsilon = \frac{\xi_\epsilon}{4T_\epsilon^2}.
\end{align*}
We divide the lengthy proof into five steps, each bounding the different sources of the error. For simplicity we also assume $m=n$ and $\mathcal{N}(A^\top A)=\mathcal{N}(\Sigma(U)) = \{0\}$.
\medskip

\noindent{}{\bf Step 1: Show $(z_{j,k}',z_{i,k})=\delta_{ij}$ under the given spectral alignment assumption.} We first show that the condition on $A$ and $\Sigma(U)$, i.e., $\Sigma(U)$ and $A^\top A$ share the (orthonormal) eigenbasis $(w_i)_{i=1}^n$, imply
\begin{equation}\label{eqn:ortho}
(z_{j,k}',z_{i,k}) = \delta_{ij}.
\end{equation}
Indeed, with $u_i:=Aw_i/\alpha_i$, we have
\begin{equation}\label{t0:e1}
A \Sigma(U) A^\top u_i = \frac{1}{\alpha_i} A \Sigma(U) A^\top A w_i = \alpha_i A \Sigma(U) w_i = \alpha_i \sigma_i^2 A w_i = \alpha_i^2 \sigma_i^2 u_i,
\end{equation}
which implies $w_i' = u_i$. Therefore, by the defining relations for $z_{j,k}'$ and $z_{i,k}$ in \eqref{eqn:svd}, we have
\begin{align*}
(z_{j,k}',z_{i,k}) &= \frac{1}{\sigma_j' \sigma_i}\left( J^\top A^\top w_j',J^Tw_i\right) = \frac{1}{\sigma_j'\sigma_i} \left( J^\top A^\top u_j,J^\top w_i\right).
\end{align*}
This, the identity $ A^\top u_j = A^\top A w_j/\alpha_j = \alpha_jw_j$ and the relation $\Sigma(U)=JJ^\top$ in \eqref{eqn:ref-Jacobian} imply
\begin{align*}
(z_{j,k}',z_{i,k})&= \frac{\alpha_j}{\sigma_j'\sigma_i} \left( J^\top w_j, J^\top w_i\right) = \frac{\alpha_j}{\sigma_j'\sigma_i} \left(   w_j, JJ^\top w_i\right)\\
&=\frac{\alpha_j}{\sigma_j'\sigma_j}\left(w_j, \Sigma(U) w_i\right) = \frac{\alpha_j\sigma_i}{\sigma_j'}\left( w_j,w_i\right)=\frac{\alpha_j\sigma_i}{\sigma_j'}\delta_{ij}=\delta_{ij},
\end{align*}
since $\sigma_j'^2$ are the eigenvalues of $A\Sigma(U)A^\top $, which were proven to be equal to $\sigma_j^2\alpha_j^2$ in \eqref{t0:e1}. The identity \eqref{eqn:ortho} allows simplifying the double summation in Theorem \ref{s2:t2}. Moreover, {\color{black} since all the eigenvalues of $\Sigma(U)=JJ^\top$ are positive by assumption, it follows that $\{0\}=\mathcal{N}(J^\top) = \mathcal{R}(J)^\perp$, which implies $\R^n=\mathcal{R}(J) = G(C_0)+\mathcal{R}(J)=\mathcal{R}(\tilde{G})$ and thus} $\|\mathcal{P}_{\mathcal{R}(\tilde{G})}  x^\dag -x^\dag \|=0$.
With this fact and the identity \eqref{eqn:ortho}, Theorem \ref{s2:t2} gives the following error bound 
\begin{align*}
\|G(C^\epsilon_{\tau^\epsilon_{\rm dp}}) - x^\dag\|\le& \sqrt{\sum_{i=1}^n \sigma_i^2 \frac{(1-(1-\eta\sigma_i'^2)^{\tau^\epsilon_{\rm dp}})^2}{\sigma_i'^2}(w_i',y^\epsilon-y)^2}\\
& + \sqrt{\sum_{i=1}^n\left(\sigma_i \frac{1-(1-\sigma_i'^2)^{\tau^\epsilon_{\rm dp}}}{\sigma_i'^2}(A^\top w_i',x^\dag) - (x^\dag,w_i)\right)^2}\\
& + \sqrt{\sum_{i=1}^n\sigma_i^2\frac{(1-(1-\sigma_i')^{\tau^\epsilon_{\rm dp}})^2}{\sigma_i'^2}(A^\top w_i',G(C_0))^2} + 3\xi_\epsilon\|y^\epsilon\|
=: \sum_{i=1}^3\mathcal{E}_i + 3\xi_\epsilon\|y^\epsilon\|.
\end{align*}
The error $\mathcal{E}_1$ arises from the presence of data noise (and hence called data noise propagation error), the error $\mathcal{E}_2$ arises from the early stopping of the iteration and depends on the source condition \eqref{eqn:source} (and hence approximation error), and the last term $\mathcal{E}_3$ arises from the nonzero initial condition $C_0$ in the convolutional generator $G(C)$. 
The remaining task is to bound these three terms $\mathcal{E}_i$, $i=1,2,3$, below.

\medskip
\noindent{\bf Step 2: Bounding the error induced by the nonzero  initial point $C_0$}. Now we analyze the error $\mathcal{E}_3$ due to nonzero initial point $C_0$, which is essentially independent of the stopping index. 
Indeed, since $\sigma_i'=\alpha_i\sigma_i$ and $A^Tw_i'=\alpha_i w_i$, for any $\tau\in\mathbb{N}$, by the SVD of $A$, we have
\begin{align}
\mathcal{E}_3= &\sqrt{\sum_{i=1}^n\sigma_i^2\left(\frac{1-(1-\alpha_i^2\sigma_i^2)^{\tau^\epsilon_{\rm dp}}}{\alpha_i\sigma_i} \alpha_i(w_i,G(C_0))\right)^2}\nonumber\\
=&\sqrt{\sum_{i=1}^n \left(1-(1-\alpha_i^2\sigma_i^2)^{\tau^\epsilon_{\rm dp}}\right)^2(w_i,G(C_0))^2}
\le\|G(C_0)\|.\label{eqn:basic-est-GC_0}
\end{align}
Moreover, with a probability at least $1-\delta_\epsilon$, by \eqref{p1:e3}, we have
$\|G(C_0)\| \le \xi_\epsilon \|y^\epsilon\|$,
and consequently, together with \eqref{eqn:basic-est-GC_0} and the choice of $\xi_\epsilon$, we obtain
$\mathcal{E}_3\le \xi_\epsilon\|y^\epsilon\| \le \tfrac{L-1}{4}\epsilon$.

\medskip
\noindent{\bf Step 3: Relation between nonlinear and linearized residuals.} Now we analyze the discrepancy principle \eqref{eqn:dp}. First, we show that the stopping index $\tau_{\rm dp}^\epsilon$ is well-defined. Indeed, by the triangle inequality and the expression \eqref{eqn:res-lin} of the linearized residual $\tilde r_{T_\epsilon}$, we have
 \begin{align*}
 &\|r^\epsilon_{T_\epsilon}\|\le \|\tilde{r}_{T_\epsilon}\| +\|\tilde{r}^\epsilon_{T_\epsilon}-\tilde{r}_{T_\epsilon}\|+\|r_{T_\epsilon}^\epsilon-\tilde{r}^\epsilon_{T_\epsilon}\|\\
 =& \|(I-AJJ^\top A^\top)^{T_\epsilon}(AG(C_0) - y)\|+\|\tilde{r}^\epsilon_{T_\epsilon}-\tilde{r}_{T_\epsilon}\|+\|r^\epsilon_{T_\epsilon}-\tilde{r}^\epsilon_{T_\epsilon}\|\\
 \leq&\|(I-AJJ^\top A^\top)^{T_\epsilon}y\|+\|(I-AJJ^\top A^\top)^{T_\epsilon}AG(C_0)\|
 +\|\tilde{r}^\epsilon_{T_\epsilon}-\tilde{r}_{T_\epsilon}\|+\|r^\epsilon_{T_\epsilon}-\tilde{r}^\epsilon_{T_\epsilon}\|:= \sum_{\ell=1}^4 {\rm I}_\ell. 
\end{align*}
Next we bound the four terms ${\rm I}_\ell$, $\ell=1,\ldots,4$, separately.
From the source condition $x^\dag\in \mathcal{X}_{\nu,\rho}$ in  \eqref{eqn:source}, i.e., $x^\dag = (A^\top A)^\frac{\nu}{2}v$, and the SVDs of $J$ and $AJ$ we deduce
\begin{align}
   {\rm I}_1  & = \sqrt{\sum_{i=1}^n(1-\alpha_i^2\sigma_i^2)^{2T_\epsilon}(y,w_i')^2}=\sqrt{\sum_{i=1}^n(1-\alpha_i^2\sigma_i^2)^{2T_\epsilon} \alpha_i^{2(1+\nu)}(v,w_i)^2}\nonumber\\
   &\leq \rho \max_{1\le i \leq n}(1- \sigma_i^2\alpha_i^2)^{T_\epsilon}\alpha_i^{1+\nu}.\label{eq:approx}
\end{align}
Now by the assumption on the decay of $\alpha_i^2$ and $\sigma_i^2$,
\begin{align*}
\alpha_i^2&\le B_A i^{-q} = B_A \left( i^{-(q+p)}\right)^\frac{q}{q+p} \le B_A \left(\frac{\alpha_i^2\sigma_i^2}{b_Ab_\Sigma}\right)^\frac{q}{q+p} = \frac{B_A}{\left(b_A b_\Sigma\right)^\frac{q}{q+p}} \left(\alpha_i^2\sigma_i^2\right)^\frac{q}{q+p},
\end{align*}
and since $\|A\|\|\Sigma(U)\|\le 1$ by assumption, we have
\begin{align}\label{eq:approx:2}
   {\rm I}_1 &\leq \left(\frac{B_A}{\left(b_A b_\Sigma\right)^\frac{q}{q+p}}\right)^\frac{1+\nu}{2} \rho \sup_{0<\lambda\leq 1}(1- \lambda)^{T_\epsilon}\lambda^{\frac{q(1+\nu)}{2(q+p)}}\le\left(\frac{q(1+\nu) B_A^\frac{q+p}{q}}{2e(q+p)b_Ab_\Sigma}
    \right)^\frac{q(1+\nu)}{2(q+p)} \rho T_\varepsilon^{-\frac{q(1+\nu)}{2(q+p)}},
\end{align}
where the last step follows from Lemma \ref{lem:basic-est2} in the appendix (with the exponent $r=\frac{q(1+\nu)}{2(q+p)}$). 
For the term ${\rm I}_2$, by the contraction property $\|(I-AJJ^\top A^\top)\|\leq 1$, the condition $\|A\|=1$ and the estimate \eqref{p1:e3}, we have with a probability at least $1-\delta_\epsilon$, 
\begin{equation*}
        {\rm I}_2 = \|(I-AJJ^\top A^\top)^{T_\epsilon}AG_0\| \leq \|G(C_0)\| \leq \xi_\epsilon \|y^\epsilon\|.
\end{equation*}
Likewise, by Corollary \ref{s2:c1}, we have
\begin{align*}
    {\rm I}_3 &= \sqrt{\sum_{i=1}^n(1-\alpha_i^2\sigma_i^2)^{2T_\epsilon}(r^\epsilon_0-r_0,w_i')^2} \leq \|y^\epsilon-y\| = \epsilon,\\
    {\rm I}_4 &= \|r^\epsilon_{T_\epsilon} - \tilde{r}^\epsilon_{T_\epsilon}\|\leq 4\varepsilon_\epsilon T_\epsilon \|r_0^\epsilon\|.
\end{align*}
Hence, by the choices of $T_\epsilon$ (i.e., $T_\epsilon\geq (\frac{2\rho}{(L-1)\epsilon})^{\frac{2(p+q)}{q(1+\nu)}}\frac{q(1+\nu)B_A^\frac{q+p}{q}}{2e(p+q)b_A b_\Sigma}$), $\xi_\epsilon$ and $\varepsilon_\epsilon$, we obtain with a probability at least $1-\delta_\epsilon$,
 \begin{align*}
  \|r^\epsilon_{T_\epsilon}\|
   &\le \epsilon +  \left(\frac{q(1+\nu)B_A^\frac{q+p}{q}}{2e(p+q)b_Ab_\Sigma}\right)^\frac{q(1+\nu)}{2(q+p)}\!\! \!\rho T_\epsilon^{-\frac{q(1+\nu)}{2(p+q)}} + \xi_\epsilon\|y^\epsilon\|\Big(1+\frac{2}{T_\epsilon}\Big)
   \le \epsilon +\frac{L-1}{2}\epsilon + \frac{L-1}{2}\epsilon = L\epsilon.
 \end{align*}
Thus, the stopping index $\tau^\epsilon_{\rm dp}\le T_\epsilon$ is well-defined (for the discrepancy principle \eqref{eqn:dp}). Now we relate the stopped (nonlinear) residual $r^\epsilon_{\tau_{\rm dp}^\epsilon}$ to the linearized stopped residual $\tilde{r}^\epsilon_{\tau^\epsilon_{\rm dp}}$. Actually, by Corollary \ref{s2:c1} and the choice of $T_\epsilon$, with probability at least $1-\delta_\epsilon$, we have
\begin{align*}
  \|\tilde{r}^\epsilon_{\tau^\epsilon_{\rm dp}} - r^\epsilon_{\tau^\epsilon_{\rm dp}}\| \leq
  4\varepsilon_\epsilon \tau_{\rm dp}^\epsilon \|r_0^\epsilon\|\le \frac{\xi_\epsilon\|y^\epsilon\|}{T_\epsilon} \leq \frac{L-1}{2}\epsilon.
\end{align*}
Consequently, we have the following lower and upper bounds:
 \begin{align}\label{t0:e2}
\|\tilde{r}^\epsilon_{\tau^\epsilon_{\rm dp}}\|&\le \|r_{\tau^\epsilon_{\rm dp}}\| + \|\tilde{r}^\epsilon_{\tau^\epsilon_{\rm dp}} - r^\epsilon_{\tau^\epsilon_{\rm dp}}\|\le L\epsilon + \frac{L-1}{2}\epsilon \le \frac{3L-1}{2}\epsilon,\\
\label{t0:e3}
\|\tilde{r}^\epsilon_{\tau^\epsilon_{\rm dp}-1}\|&\ge \|r_{\tau^\epsilon_{\rm dp}-1}\| - \|\tilde{r}^\epsilon_{\tau^\epsilon_{\rm dp}-1} - r^\epsilon_{\tau^\epsilon_{\rm dp}-1}\|\ge L\epsilon -  \frac{L-1}{2}\epsilon\ge \frac{L+1}{2}\epsilon.
 \end{align}
{\bf Step 4: Analysis of the stopped linearized iterates}. Now we can bound the remaining two terms $\mathcal{E}_1$ and $\mathcal{E}_2$, the data propagation and (linear) approximation errors:
 \begin{align*}
 \mathcal{E}_1&= \sqrt{\sum_{i=1}^n \frac{(1-(1-\alpha_i^2\sigma_i^2)^{\tau^\epsilon_{\rm dp}})^2}{\alpha_i^2}(w_i',y^\epsilon-y)^2},\\
\mathcal{E}_2&= \sqrt{\sum_{i=1}^n \left((1-(1-\alpha_i^2\sigma_i^2)^{\tau^\epsilon_{\rm dp}})(w_i,x^\dag) - (x^\dag,w_i)\right)^2}
=\sqrt{\sum_{i=1}^n(1-\alpha_i^2\sigma_i^2)^{2\tau^\epsilon_{\rm dp}}\alpha_i^{2\nu}(v,w_i)^2},
\end{align*}
under the source condition \eqref{eqn:source}.
First we consider the (linear) approximation error $\mathcal{E}_2$. By H\"{o}lder's inequality
$$\sum_{i=1}^n |x_i y_i| \le \left(\sum_{i=1}^n |x_i|^a\right)^\frac{1}{a} \left(\sum_{i=1}^n |y_i|^b\right)^\frac{1}{b}$$
for $a,b\geq 1$ with $\frac{1}{a}+\frac{1}{b} = 1$, with the exponents $a=\frac{\nu+1}{\nu}$ and $b=\nu+1$, we deduce
\begin{align*}
\sum_{i=1}^n(1-\alpha_i^2\sigma_i^2)^{2\tau^\epsilon_{\rm dp}} \alpha_i^{2\nu}(v,w_i)^2 &= \sum_{i=1}^n(1-\alpha_i^2\sigma_i^2)^{2\tau^\epsilon_{\rm dp}} \alpha_i^{2\nu} (v,w_i)^\frac{2\nu}{\nu+1} (v,w_i)^\frac{2}{\nu+1}\\
&\le \left(\sum_{i=1}^n(1-\alpha_i^2\sigma_i^2)^\frac{\tau^\epsilon_{\rm dp} (\nu+1)}{\nu} \alpha_i^{2(\nu+1)} (v,w_i)^2\right)^\frac{\nu}{\nu+1}\left(\sum_{i=1}^n(v,w_i)^2\right)^\frac{1}{\nu+1}.
\end{align*}
Furthermore, by the source condition $x^\dag \in \mathcal{X}_{\nu,\rho}$, we have
\begin{align*}
&\sum_{i=1}^n(1-\alpha_i^2\sigma_i^2)^\frac{\tau^\epsilon_{\rm dp} (\nu+1)}{\nu} \alpha_i^{2(\nu+1)} (v,w_i)^2\\
\le&\sum_{i=1}^n(1-\alpha_i^2\sigma_i^2)^{\tau^\epsilon_{\rm dp}} \alpha_i^{2(\nu+1)} (v,w_i)^2 
=\|(I-AJJ^\top A)^{\tau_{\rm dp}^\epsilon}y\|^2.
\end{align*}
Note that the quantity $(I-AJJ^\top A)^{\tau_{\rm dp}^\epsilon}y$ differs from the linearized residual $\tilde r_{\tau_{\rm dp}^\epsilon}^\epsilon:=A\tilde G(C_{\tau^\epsilon_{\rm dp}})-y=(I-AJJ^\top A^\top)^{\tau_{\rm dp}^\epsilon}(AG(C_0)-y)$ (for exact data $y$) defined in \eqref{eqn:res-lin} by the term involving $G(C_0)$. Next, by the triangle inequality, we have
\begin{align*}
\|(I-AJJ^\top A)^{\tau_{\rm dp}^\epsilon}y\|\le& \|(I-AJJ^\top A)^{\tau_{\rm dp}^\epsilon}(y-AG(C_0))\| + \|(I-AJJ^\top A)^{\tau_{\rm dp}^\epsilon}AG(C_0)\| \\
= & \|A\tilde G(\tilde C_{\tau^\epsilon_{\rm dp}})-y\| + \|(I-AJJ^\top A)^{\tau_{\rm dp}^\epsilon}AG(C_0)\|\\
\leq& \|AG(C_0)\| + \|A\tilde{G}(\tilde{C}^\epsilon_{\tau^\epsilon_{\rm dp}}) - y^\epsilon\| + \|A\tilde{G}(\tilde{C}_{\tau^\epsilon_{\rm dp}})- A\tilde{G}(\tilde{C}^\epsilon_{\tau^\epsilon_{\rm dp}}) + y^\epsilon-y\|\\
=&\|AG(C_0)\| + \|\tilde{r}^\epsilon_{\tau^\epsilon_{\rm dp}}\| + \|\tilde{r}_{\tau^\epsilon_{\rm dp}} - \tilde{r}^\epsilon_{\tau^\epsilon_{\rm dp}} \|.
\end{align*}
From the estimate \eqref{t0:e2}, we obtain
$\|\tilde{r}^\epsilon_{\tau^\epsilon_{\rm dp}}\|\le \frac{3L-1}{2} \epsilon$.
Moreover, since $\tilde{r}_\tau=(I-AJJ^\top A^\top)^\tau r_0$ and $\tilde{r}^\epsilon_\tau = (I-AJJ^\top A^\top)^\tau r^\epsilon_0$, we deduce
\begin{align*}
\|\tilde{r}_{\tau^\epsilon_{\rm dp}} - \tilde{r}^\epsilon_{\tau^\epsilon_{\rm dp}}\| = \|(I- A J J^\top A^\top)^{\tau^\epsilon_{\rm dp}}(y^\epsilon-y)\| \le \epsilon.
\end{align*}
Furthermore, by \eqref{p1:e3}, with probability at least $1-\delta_\epsilon$, we have
\begin{align*}
    \|AG(C_0)\| & \leq \|G(C_0)\| \leq \xi_\epsilon \|y^\epsilon\| \leq \tfrac{L-1}{4}\epsilon.
\end{align*}
Thus, we arrive at 
\begin{equation*}
    \|(I-AJJ^\top A^\top)^{\tau_{\rm dp}^\epsilon}y\|\leq \tfrac{7L+1}{4}\epsilon \leq 2L\epsilon.
\end{equation*}
Together with the inequality $\sum_{i=1}^n(v,w_i)^2\le \rho^2$,  we finally obtain
\begin{align*}
\sqrt{\sum_{i=1}^n(1-\alpha_i^2\sigma_i^2)^{2\tau_{\rm dp}^\epsilon}\alpha_i^{2\nu}(v,w_i)^2}\le (2L)^\frac{\nu}{\nu+1} \epsilon^\frac{\nu}{\nu+1} \rho^\frac{1}{\nu+1}.
\end{align*}
Now set $\tau:=\tau^\epsilon_{\rm dp}-1$, and we may assume $\tau\ge 0$. Then, the estimate \eqref{t0:e3} implies
\begin{align*}
\tfrac{L+1}{2} \epsilon &<\|\tilde{r}^\epsilon_\tau\| \le \|\tilde{r}_\tau\| + \|\tilde{r}^\epsilon_\tau-\tilde{r}_\tau\|\le \|\tilde{r}_\tau\| + \epsilon \\
&\leq \|(I-AJJ^\top A^\top)^{\tau^\epsilon_{\rm dp}}y\| + \|(I-AJJ^\top A^\top)G(C_0)\| + \epsilon.
\end{align*}
With the argument as in \eqref{eq:approx} and \eqref{eq:approx:2}, it follows that
\begin{align*}
\frac{L-1}{2}\epsilon - \|AG(C_0)\|&\le \sqrt{\sum_{i=1}^n(1-\sigma_i^2\alpha_i^2)^{2\tau^\epsilon_{\rm dp}}\alpha_i^{2(1+\nu)} (v,w_i)^2}\\
&\le \left(\frac{q(1+\nu) B_A^\frac{q+p}{q}}{2e(q+p)b_Ab_\Sigma}
    \right)^\frac{q(1+\nu)}{2(q+p)} \rho (\tau^\epsilon_{\rm dp})^{-\frac{q(1+\nu)}{2(q+p)}}.
\end{align*}
Then by \eqref{p1:e3} and the choice of $\xi_\varepsilon$, with probability at least $1-\delta_\epsilon$,
there holds
\begin{equation*}
\tau^\epsilon_{\rm dp} \le \frac{q(1+\nu) B_A^\frac{q+p}{q}}{2e(q+p)b_Ab_\Sigma}\left(\frac{4\rho}{(L-1)\epsilon}\right)^\frac{2(p+q)}{q(1+\nu)}.
\end{equation*}
Similarly, by Lemma \ref{lem:basic-est} (in the appendix), we have
\begin{align}
\mathcal{E}_1&=\sqrt{\sum_{i=1}^n \frac{(1-(1-\alpha_i^2\sigma_i^2)^{\tau^\epsilon_{\rm dp}})^2}{\alpha_i^2}(w_i',y^\epsilon-y)^2}\le \epsilon \max_{1\le i \le n} \frac{(1-(1-\alpha_i^2\sigma_i^2)^{\tau^\epsilon_{\rm dp}})}{\alpha_i}\nonumber\\
&\le \epsilon b_A^{-\frac{1}{2}}\left(B_AB_\Sigma\right)^\frac{q}{2(q+p)}\sup_{0<\lambda\le 1}\frac{(1-(1-\lambda)^{\tau^\epsilon_{\rm dp}})}{\lambda^{\frac{q}{2(q+p)}}}\le \epsilon b_A^{-\frac{1}{2}}\left(B_AB_\Sigma\right)^\frac{q}{2(q+p)} (\tau_{\rm dp}^{\epsilon})^\frac{q}{2(p+q)}.\label{eqn:E1}
\end{align}
Finally,  combining the previous estimates yields
\begin{align*}
\mathcal{E}_1+\mathcal{E}_2\le&\left(B_AB_\Sigma\right)^\frac{q}{2(q+p)}b_A^{-\frac{1}{2}} \epsilon (\tau^\epsilon_{\rm dp})^\frac{q}{2(p+q)}+\left(2L\right)^\frac{\nu}{\nu+1} \epsilon^\frac{\nu}{\nu+1} \rho^\frac{1}{\nu+1}\\
\le& \left(\left(2L\right)^\frac{\nu}{\nu+1} + \left(\frac{q(1+\nu) B_A^\frac{2q+p}{q}B_\Sigma}{2e(q+p)b_A^\frac{2q+p}{q}b_\Sigma}\right)^\frac{q}{2(q+p)}\left(\frac{4\rho}{L-1}\right)^\frac{1}{1+\nu} \right)  \epsilon^\frac{\nu}{\nu+1} \rho^\frac{1}{\nu+1}.
\end{align*}

\noindent{\bf Step 5: Combining Steps 1 to 4}. By combining the previous estimates, we obtain the desired error bound from Theorem \ref{s2:t2} by
\begin{align*}
&\|G(C^\epsilon_{\tau^\epsilon_{\rm dp}}) - x^\dag\|\le \sum_{i=1}^3 \mathcal{E}_i + 3\xi_\epsilon \|y^\epsilon\|\\ 
\le& 4\xi_\epsilon\|y^\epsilon\| +  \left(\left(2L\right)^\frac{\nu}{\nu+1} + \left(\frac{q(1+\nu) B_A^\frac{2q+p}{q}B_\Sigma}{2e(q+p)b_A^\frac{2q+p}{q}b_\Sigma}\right)^\frac{q}{2(q+p)}\left(\frac{4\rho}{L-1}\right)^\frac{1}{1+\nu} \right) \epsilon^\frac{\nu}{\nu+1}\rho^\frac{1}{\nu+1} \\
\le& \tilde L (\epsilon + \epsilon^\frac{\nu}{\nu+1}\rho^\frac{1}{\nu+1}),
\end{align*}
with probability at least $1-4\delta_\epsilon$. Thus, the proof of the theorem is finished by setting 
\begin{equation*}
    \tilde{L}:= \left(2L\right)^\frac{\nu}{\nu+1} + \left(\frac{q(1+\nu) B_A^\frac{2q+p}{q}B_\Sigma}{2e(q+p)b_A^\frac{2q+p}{q}b_\Sigma}\right)^\frac{q}{2(q+p)}\left(\frac{4\rho}{L-1}\right)^\frac{1}{1+\nu} +L-1,
\end{equation*}
which depends only on the parameters $b_A$, $B_A$, $b_\Sigma$, $B_\Sigma$, $p$, $q$, $\nu$, and $L$.
\end{proof}

\begin{remark}
{\color{black}Under the conditions in Theorem \ref{t0}, one can derive an \textit{a priori} stopping rule. Indeed, from \eqref{eqn:E1}, the data propagation error is bounded by 
\begin{align*}
\sqrt{\sum_{i=1}^n \frac{(1-(1-\alpha_i^2\sigma_i^2)^{\tau})^2}{\alpha_i^2}(w_i',y^\epsilon-y)^2}\le \epsilon b_A^{-\frac{1}{2}}\left(B_AB_\Sigma\right)^\frac{q}{2(q+p)} \tau^\frac{q}{2(p+q)}.
\end{align*}
Similarly, with Lemma \ref{lem:basic-est2}, the linear approximation error is bounded by
\begin{align*}
\sqrt{\sum_{i=1}^n(1-\alpha_i^2\sigma_i^2)^{2\tau^\epsilon_{\rm dp}}\alpha_i^{2\nu}(v,w_i)^2}&\leq \rho\frac{B_A^\frac{\nu}{2}}{(b_Ab_\Sigma)^\frac{q\nu}{2(q+p)}}\sup_{0\le \lambda\leq 1}(1-\lambda)^\tau\lambda^\frac{q\nu}{2(q+p)} \\
& \leq \rho \frac{B_A^\frac{\nu}{2}}{(b_Ab_\Sigma)^\frac{q\nu}{2(q+p)}} \Big(\frac{q\nu }{2e(q+p)}\Big)^\frac{q\nu}{2(q+p)}\tau^{-\frac{q\nu}{2(q+p)}}.
\end{align*}
By choosing $\tau^*=\Big(\frac{\rho^2B_Ab_A}{e^2}\Big)^\frac{q+p}{(1+\nu)q} \Big(\frac{q\nu}{2e(q+p)}\Big)^\frac{\nu}{1+\nu}\frac{1}{(B_AB_\Sigma)^\frac{1}{1+\nu}(b_Ab_\Sigma)^\frac{\nu}{1+\nu}}$ to balance the two components, we obtain that with probability at least $1-3\delta_\epsilon$,
\begin{align*}
    \|G(C^\epsilon_{\tau^*})-x^\dag\|\leq \widehat{L}(\epsilon + \epsilon^\frac{\nu}{1+\nu}\rho^\frac{1}{1+\nu}),
\end{align*}
with the constant $\widehat{L}$ given by 
$$\widehat{L}=L-1+2\Big(\frac{B_AB_\Sigma}{b_Ab_\Sigma}\Big)^{\frac{q\nu}{2(q+p)(1+\nu)}}\Big(\frac{B_A}{b_A}\Big)^\frac{\nu}{2(1+\nu)}\Big(\frac{2\nu(q+p))}{eq}\Big)^\frac{q\nu}{2(q+p)(1+\nu)}.$$
This rate is comparable with that for the discrepancy principle in Theorem \ref{t0}.
}    
\end{remark}

\section{Numerical experiments and discussions} \label{s4}

In this section, we present some numerical results to illustrate the theoretical analysis. The purpose of the experiments is twofold: (i) to validate the convergence rate analysis in a model setting, and (ii) to demonstrate the feasibility of the discrepancy principle \eqref{eqn:dp} in practically more relevant settings. 

First, to validate the convergence rate numerically, we construct the two-layer convolutional generator $G$ and the forward operator $A$ as follows. We set $n=2^6$ and $k=n^2=2^{12}$. We choose the convolutional matrix $U_*$ with $*\in\{{\rm r, s}\}$ (the subscripts r and s stand for rough and smooth, respectively) such that the matrix $\Sigma(U_*)\in\R^{n\times n}$ is diagonal with entries $(\Sigma(U))_{ii} = i^{-p}$, and take a rough architecture with $p_{\rm r}=\frac{1}{2}$ and a smooth one with $p_{\rm s}= \frac{3}{2}$, respectively (Note that eventually we have $\|U\|={\sqrt{2}}$.). For the forward model $A_\cdot\in\mathbb{R}^{n\times n}$ with $\cdot\in\{{\rm a, na}\}$ (the subscripts a and na stand for aligned and non-aligned, respectively), we consider an aligned one {\color{black}$A_{\rm a}$, which is a diagonal matrix with the $i$-th diagonal entry $i^{-\frac{q}{2}}$, and a nonaligned one $A_{\rm na}=HA_{\rm a}H^\top$, where $H\in\R^{n\times n}$ is uniformly sampled from the set of orthogonal matrices}\footnote{\color{black}More precisely, we set $H=UV^\top$, where $N\in\R^{n\times n}$ has i.i.d. standard Gaussian entries with singular value decomposition $N=U\Sigma V^\top$.}. The parameter $q$ is set to $4$. This yields a condition number of ${\rm cond}(A) = {\sigma_1}/{\sigma_n} = 2^{12}$. Then we define the true solution $x^\dag$ for the aligned and non-aligned models to be $x^\dagger_\cdot := A_\cdot^\top A_\cdot w$, with $w\in\R^n$ is the one vector (i.e., $w_i=1$ for all $i=1,...,n$). The exact data $y^\dagger_{ \cdot}$ is $A_{ \cdot} x^\dagger_{ \cdot}$. {\color{black}This procedure ignores the potential discretization errors etc. as is commonly encountered in practice but does allow easily constructing reference solutions satisfying the source-wise representation \eqref{eqn:source}.}
The additive measurement errors $\xi_{\cdot}\in\R^n$ consist of centred i.i.d. Gaussians with variance given by
$\sigma_{\cdot}^2 = \| y^\dagger_{\cdot}\|^2/({n {\rm SNR}^2})$, where ${\rm SNR}\in\{1,3,3^2,3^3,3^4\}$ is the signal-to-noise ratio. The noisy measurements $y^\epsilon_\cdot$ are then given by
$y^\epsilon_\cdot:=y^\dagger_\cdot + \xi_\cdot$,
with the noise level $\epsilon_\cdot:=\|\xi_\cdot\|$. The initial weight matrices $C_0^\cdot \in \R^{n \times k}$ consist of centred i.i.d. Gaussians with variance
$\omega_\cdot^2:={\sigma^2_\cdot}/{\sqrt{n}}$.
For each possible combination of $\{\rm a, na\}$ and $\{\rm s, r\}$, we carry out $\tau_{\rm max}=1500$ iterates to learn the parameter ${C}$, which we denote by $C_\tau^{\cdot,*}$ with $\cdot\in\{{\rm a, na}\}$ and $*\in\{{\rm s, r}\}$. Then we apply the discrepancy principle \eqref{eqn:dp} to choose the stopping index $\tau_{\rm dp}^{\cdot,*}$, and use the index $\tau_{\rm min}^{\cdot,*}$ with the minimal error for comparison:
 \begin{align*}
 \tau_{\rm dp}^{\cdot,*}:&=\min\{\tau \le \tau_{\max}:~\|A_{\cdot}G_{*}(C_{\tau}^{\cdot,*}) - y^\epsilon_{\cdot}\|\le L \epsilon_\cdot\}\quad \mbox{and}\quad
 \tau_{\rm min}^{\cdot,*}:=\arg\min_{\tau \le \tau_{\max}}\|G_*(C_\tau^{\cdot,*}) - x^\dagger_\cdot\|,
 \end{align*}
with the fudge parameter $L=1.05$, {\color{black}as stated in Theorem \ref{t0} (but in practice, one may get better results with the discrepancy principle \eqref{eqn:dp} with some $L<1$; see, e.g. \cite{BerteroBoccacci:2010} for interesting discussions).} For each case, we repeat the experiment $20$ times (different realizations of random noise and initialization) and display the sample mean with the standard deviation (shown in brackets) of the stopping indices and the corresponding relative errors $e_{\min}^{\cdot,*}:=\|G_*(C_{\tau^{\cdot,*}_{\min}})-x^\dagger_\cdot\|/\|x^\dagger_\cdot\|$ and $e_{\rm dp}^{\cdot,*}:=\|G_*(C_{\tau^{\cdot,*}_{\rm dp}})-x^\dagger_\cdot\|/\|x^\dagger_\cdot\|$ in Tables \ref{tab:err-aligned} -- \ref{tab:ind-nonaligned}.
 
{\color{black}We first discuss the aligned model covered by the theory. From Table \ref{tab:err-aligned}, we can clearly observe the convergence of the iterations towards the true solution $x^\dag$ for increasing ${\rm SNR}$ (i.e., decreasing noise level). The error $e_{\rm dp}^{\rm \cdot,*}$ obtained with the discrepancy principle \eqref{eqn:dp} is about $5-100$ percent higher than the minimal error $e_{\rm min}^{\cdot,*}$, with the (relative) margin being larger for smooth $U_{\rm s}$. By comparing the errors obtained for the aligned model $A_{\rm a}$ with smooth $U_{\rm s}$ with that for the aligned model $A_{\rm a}$ with rough $U_{\rm r}$, the latter are more than twice as large. Further, the numerical results are also largely consistent with the convergence rate predicted by Theorem \ref{t0}. Indeed, Theorem \ref{t0} gives the relation ${ e} \sim \epsilon^\frac{\nu}{\nu+1}$ (for each fixed true solution), which yields an affine linear relation $\log({ e}) \sim \frac{\nu}{\nu+1} \log(\epsilon) + {\rm const.}$. Since $\epsilon \sim {\rm SNR}^{-1},$ we perform a linear regression for $\log({\rm SNR}^{-1})$ versus $\log({ e}_{\rm min}^{\rm as})$ and $\log({ e}_{\rm min}^{\rm ar})$, respectively. The resulting estimated smoothness parameters are $\nu_{\rm as} \approx 1.16$ and $\nu_{\rm ar} \approx 1.04$ with corresponding adjusted $\mathcal{R}^2$ values of $0.99891$ and $0.99995$, respectively (recall that a perfect linear relation would yield $1.0$). Meanwhile, from  Table \ref{tab:ind-aligned} it can be observed that the indices $\tau_{\rm min}^{\rm a,*}$ and $\tau_{\rm dp}^{\rm a,*}$ are significantly smaller when using the rough $U_{\rm r}$ instead of the smooth $U_{\rm s}$: That is, in the case of rough $U_{\rm r}$, gradient descent requires fewer iterations to reach both the stopping criterion and the optimal index. In contrast, the results for the nonaligned model exhibit different observations, as is indicated by Table \ref{tab:err-nonaligned}. Here, for smooth $U_{\rm s}$, the minimal error stays effectively constant for all noise levels. In contrast, for rough $U_{\rm r}$, we observe convergence, however, applying linear regression after taking the logarithms (only for ${\rm SNR}\in\{1,...,27\}$, since clearly for ${\rm SNR}=81$, the maximal number of iterations is too small) yields a much smaller estimated smoothness parameter $\nu_{\rm nr}\approx 0.13$, where the corresponding adjusted $\mathcal{R}^2$ value of $0.94691$ indicates that the relation ${ e} \sim \epsilon^\frac{\nu}{\nu+1}$ probably is not valid here. While the behaviour of the stopping indices for rough $U_{\rm r}$ in Table \ref{tab:ind-nonaligned} is similar to the aligned setting, it is completely different for smooth $U_{\rm s}$. Additional experiments (not reported)  indicate that a much higher number of iterations is required in this case.} 
Similarly, the stopping indices for both rough $U_{\rm r}$ and smooth $U_{\rm s}$ are substantially smaller for the aligned forward model $A_{\rm a}$ than for the non-aligned one $A_{\rm na}$. Since the smoothness of the true solution $x^\dag_\cdot$ relative to the respective forward model $A_\cdot$ is the same in all cases, this behavior is attributed to the fact that by construction the condition number of $A_\cdot J_{\rm s}$ is larger than that of $A_\cdot J_{\rm r}$ (recall that $\Sigma(U_*) = J_*J_*^\top$), and likewise the condition number of $A_{\rm a}J_*$ is larger than that of $A_{\rm na}J_*$. Also, the stopping index $\tau_{\rm dp}^{\cdot,*}$ of the discrepancy principle \eqref{eqn:dp} is usually smaller than the minimizing index $\tau_{\rm min}^{\cdot,*}$. This means that the accuracy loss of the discrepancy principle \eqref{eqn:dp} is actually due to stopping too early (and thus in practice one may choose $L$ very close to $1$), and not to a lack of stability. Note that in the experiments we have chosen the fudge parameter $L$ relatively close to one, which is justified since we are using the discrepancy principle \eqref{eqn:dp} with the exact error bound. If this bound has to be estimated, as is often the case in practice, one typically has to choose a larger $L$. 

\begin{table}[hbt!]
\centering
\caption{Relative error for aligned $A_{\rm a}$.\label{tab:err-aligned}}
\setlength{\tabcolsep}{4pt}
\begin{tabular}{c|cc|cc|}
\toprule
\multicolumn{1}{c}{}&\multicolumn{2}{c}{smooth $U$} & \multicolumn{2}{c}{rough $U$} \\
\cmidrule(l){2-3} \cmidrule(l){4-5} 
${\rm SNR}$ & $e_{\min}$  & $e_{\rm dp}$ & $e_{\min}$ & $e_{\rm dp}$ \\
\midrule
1& 1.52e-1  (5.9e-2) & 1.67e-1  (6.0e-2) & 3.89e-1  (4.8e-2) & 3.96e-1  (4.8e-2) \\
 3&7.98e-2  (2.6e-2) & 1.10e-1  (3.2e-2) & 2.19e-1  (2.5e-2) & 2.33e-1  (2.9e-2) \\
 $3^2$&4.66e-2  (1.4e-2) & 7.01e-2  (1.6e-2) & 1.26e-1  (1.4e-2) & 1.36e-1  (1.7e-2) \\
 $3^3$ & 2.48e-2  (6.0e-3) & 5.13e-2  (9.9e-2) & 7.19e-2  (7.2e-3) & 8.34e-2  (9.2e-3) \\
 $3^4$ & 1.43e-2  (3.9e-3) & 2.29e-2  (4.2e-3) & 4.12e-2  (4.3e-3) & 4.45e-2  (4.7e-3) \\
\bottomrule
\end{tabular}
\end{table}

\begin{table}[hbt!]
\centering
\caption{Stopping index for aligned $A_{\rm a}$.\label{tab:ind-aligned}}
\setlength{\tabcolsep}{4pt}
\begin{tabular}{c|cc|cc|}
\toprule
\multicolumn{1}{c}{}&\multicolumn{2}{c}{smooth $U$} & \multicolumn{2}{c}{rough $U$} \\
\cmidrule(l){2-3} \cmidrule(l){4-5} 
${\rm SNR}$ & {\color{black}$\tau_{\min}$}  & {\color{black}$\tau_{\rm dp}$} & {\color{black}$\tau_{\min}$} & {\color{black}$\tau_{\rm dp}$} \\
\midrule
  1 &  9.1   (21.1) &  1.0  (0.0) &   4.7    (8.1) &  1.0 (0.0) \\
  3 & 16.4   (33.5) &  1.0  (0.0) &  11.8   (19.8) &  1.0  (0.0) \\
  $3^2$& 63.5   (62.2) &  2.0  (0.2) &  26.4   (35.2) &  2.0  (0.2) \\
 $3^3$ & 137.6  (106.9) & 10.1  (6.0) &  61.1   (62.0) &  6.1  (2.7) \\
 $3^4$ & 302.5  (289.0) & 58.9  (6.8) & 111.8  (114.1) & 28.6  (4.4) \\
\bottomrule
\end{tabular}
\end{table}

\begin{table}[hbt!]
\centering
\caption{Relative error for non-aligned $A_{\rm na}$.\label{tab:err-nonaligned}}
\setlength{\tabcolsep}{4pt}
\begin{tabular}{c|cc|cc|}
\toprule
\multicolumn{1}{c}{}&\multicolumn{2}{c}{smooth $U$} & \multicolumn{2}{c}{rough $U$} \\
\cmidrule(l){2-3} \cmidrule(l){4-5} 
${\rm SNR}$ & $e_{\min}$  & $e_{\rm dp}$ & $e_{\min}$ & $e_{\rm dp}$ \\
\midrule
 $1$ & 9.91e-1  (1.1e-2) & 2.73e0  (3.2e-1) & 7.10e-1  (6.9e-2) & 7.35e-1  (6.1e-2) \\
 $3$ & 9.85e-1  (6.5e-3) & 3.19e0  (1.2e-1) & 5.96e-1  (5.4e-2) & 6.64e-1  (4.9e-2) \\
 $3^2$ & 9.83e-1  (3.9e-3) & 2.98e0  (5.0e-2) & 5.18e-1  (2.6e-2) & 5.48e-1 (3.6e-2) \\
 $3^3$ & 9.82e-1 (2.4e-3) & 2.89e0  (3.4e-2) & 4.85e-1  (1.9e-2) & 4.99e-1  (1.3e-2) \\
 $3^4$ & 9.81e-1  (1.5e-3) & 2.86e0  (2.8e-2) & 4.75e-1  (9.4e-3) & 4.81e-1  (1.1e-2) \\
\bottomrule
\end{tabular}
\end{table}

\begin{table}[hbt!]
\centering
\caption{Stopping index for non-aligned $A_{\rm na}$.\label{tab:ind-nonaligned}}
\setlength{\tabcolsep}{4pt}
\begin{tabular}{c|cc|cc|}
\toprule
\multicolumn{1}{c}{}&\multicolumn{2}{c}{smooth $U$} & \multicolumn{2}{c}{rough $U$} \\
\cmidrule(l){2-3} \cmidrule(l){4-5} 
${\rm SNR}$ & {\color{black}$\tau_{\min}$}  & {\color{black}$\tau_{\rm dp}$} & {\color{black}$\tau_{\min}$} & {\color{black}$\tau_{\rm dp}$} \\
\midrule
1 & 1.4  (0.8) &   48.4   (16.5) &   20.8   (25.9) &   5.2    (0.8) \\
3 & 1.5  (0.6) &  433.9  (238.9) &  127.7   (81.5) &  15.6    (7.6) \\
$3^2$ & 1.6  (0.5) & 1467.2  (105.8) &  439.8  (357.3) &  82.7   (18.1) \\
 $3^3$ & 1.9  (0.2) & 1500.0    (0.0) & 1100.2  (569.6) & 192.7   (26.6) \\
 $3^4$ & 2.0  (0.0) & 1500.0   (0.0) & 1500.0    (0.0) & 848.6  (441.8) \\
\bottomrule
\end{tabular}
\end{table}

In summary, our small experimental study verifies the theoretical results of Theorems \ref{t0} and \ref{s2:t2}. Regarding the alignment assumption of the singular vectors used in Theorem \ref{t0} to derive a convergence rate, the experiments indicate that the assumption is indeed critical for the success of the method. For the choice of the convolutional matrix $U$, a smoother $U$ may give additional accuracy, but at the cost of increasing the number of iterations (and thus the numerical cost) required to achieve that accuracy. 

Finally, we illustrate the discrepancy principle \eqref{eqn:dp} on a more realistic setting. Obviously, the two-layer convolutional generator analyzed in this paper is not well suited for practical applications due to its simplicity. Therefore, we investigate the application of the discrepancy principle \eqref{eqn:dp} to a practically competitive architecture, i.e., the deep decoder \cite{Heckel:2018deep}. This network was also used for the numerical examples in \cite{HeckelSoltanolkotabi:2020denoising,HeckelSoltanolkotabi:2020CS}, where the two-layer generator was theoretically analyzed in related settings. We choose the same network structure as in \cite{HeckelSoltanolkotabi:2020CS}, with four layers. To minimize the loss, we use  {\color{black}  gradient descent, with two empirically chosen constant learning rates (LRs) of 1e-4 and 4e-4.}
Specifically, we consider the task of deblurring a corrupted image of size $128 \times 128$, blurred by a Gaussian kernel (realized by {\tt v2.GaussianBlur} from \texttt{torchvision} with kernel size $(11,11)$ and variance $\sigma=1.5$). In Figure \ref{fig:image} we plot the true image, the noisy blurred image, and the reconstructed images using the discrepancy principle \eqref{eqn:dp} for both LRs. In Figure \ref{fig:traing} we show the evolution of the residual $\|AG(C^\epsilon_\tau)-y^\epsilon\|$ and the error $\|G(C^\epsilon_\tau)-x^\dagger\|$ during the entire training process. The stopping index $\tau_{\rm dp}^*$ of the discrepancy principle \eqref{eqn:dp} is given by the point where the red bar intersects the iteration trajectory for the first time. These plots show that the discrepancy principle \eqref{eqn:dp} can indeed give a reliable reconstruction. {\color{black} The trajectories indicate that a smaller LR yields a smoother trajectory, while a larger LR leads to faster convergence. Moreover, the solutions obtained with the two LRs are different. Nonetheless, due to the ill-posedness of the deblurring task, further iterations beyond $\tau_{\rm dp}^*$ can potentially lead to overfitting and also to larger errors in the reconstructions}. Thus, when applying overparameterized neural networks to ill-posed problems, early stopping is indeed critical, as previously observed \cite{Ulyanov:2018,Barbano:2023subspace}. {\color{black}Although not presented, this phenomenon is also observed for the popular ADAM algorithm \cite{KingmaBa:2015}.}

\begin{figure}[hbt!]
\centering\setlength{\tabcolsep}{0pt}
\begin{tabular}{cc}
    \includegraphics[width=0.45\linewidth] {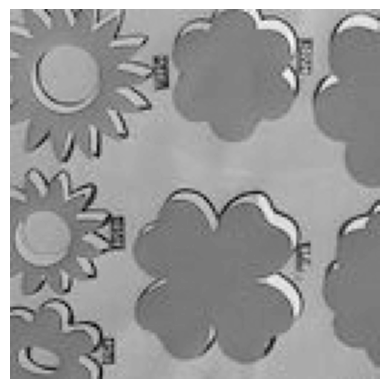}
&    \includegraphics[width=0.45\linewidth]{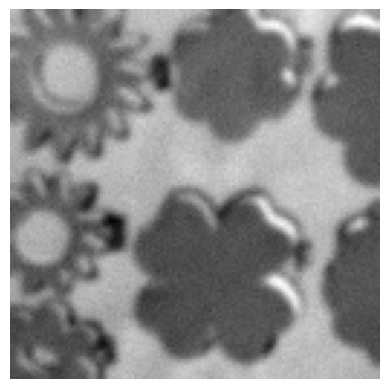}\\
(a) true image $x^\dag$ & (b) noisy blurred image $y^\epsilon$ \\
    \includegraphics[width=0.45\linewidth]{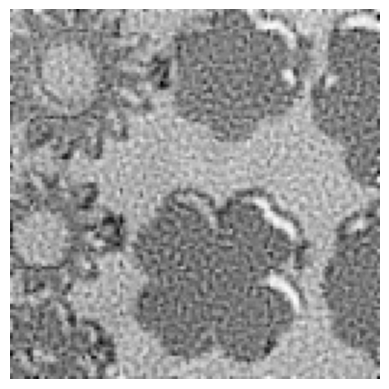}

&    \includegraphics[width=0.45\linewidth]{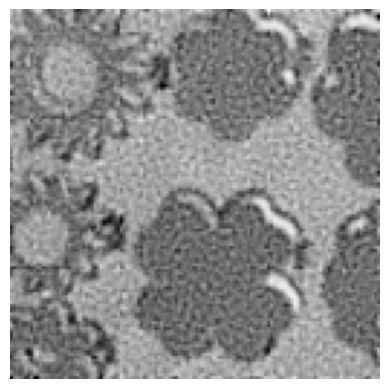}\\
{\color{black}
(c) reconstruction $G(C^\epsilon_{\tau_{\rm dp}^\epsilon})$ (LR=1e-4)} &{\color{black} (d) reconstruction $G(C^\epsilon_{\tau_{\rm dp}^\epsilon})$  (LR=4e-4)
}\end{tabular}
\caption{Ground truth, noisy data and reconstructions  for image deblurring.}\label{fig:image}
\end{figure}

\begin{figure}[hbt!]
\centering\setlength{\tabcolsep}{0pt}
\begin{tabular}{cc}
    \includegraphics[width=.48\linewidth]{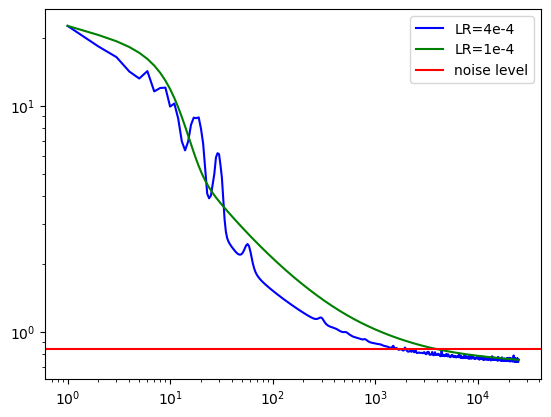}
  &  \includegraphics[width=.5\linewidth]{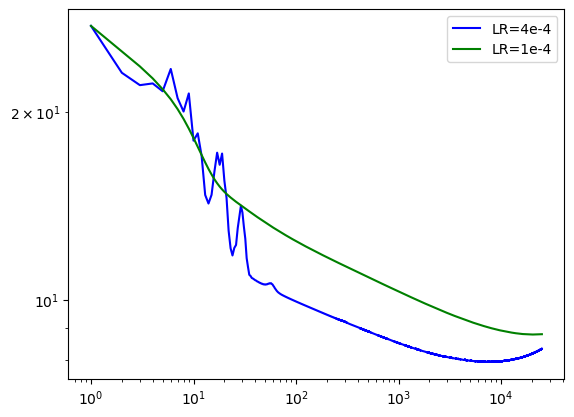}\\
(a) residuals $\|AG(C_\tau^\epsilon) - y^\epsilon\|$ & (b) reconstruction errors $\| G(C_\tau^\epsilon) - x^\dagger\|$
\end{tabular}
\caption{Training dynamics of the DIP reconstruction: (a) residual errors  $\|AG(C_\tau^\epsilon) - y^\epsilon\|$ (and noise level $\epsilon$, indicated by the red horizontal line) and (b) reconstruction errors $\| G(C_\tau^\epsilon) - x^\dagger\|$.}\label{fig:traing}
\end{figure}

\section{Concluding Remarks}

In this work, we have investigated the use of a two-layer convolutional neural network as a regularization method for solving linear ill-posed problems, in which the weights of the neural network are tuned without using training data by minimizing a least-squares loss for a single measurement. We have analyzed the data-driven discrepancy principle as a stopping rule, and derive optimal convergence rates (with high probability) under the canonical source condition. We presented several numerical experiments to confirm the theoretical results for the two-layer network, and also provided additional simulations on deblurring with competitive network architectures to illustrate the feasibility of the approach.

There are several open research questions. For example, more advanced techniques such as stochastic gradient descent, ADAM, or quasi-Newton methods could be employed instead of using a simple gradient descent algorithm for minimization. In addition, it would be of interest to extend the analysis to multi-layer architectures. Finally, the main challenge is to improve the analysis for more realistic settings. Currently, it is based on the fact that the nonlinear minimization problem can be approximated by a nearby linear problem if the neural network is wide enough. However, the required width can be huge. For example, the given lower bound cannot be satisfied in any numerical application. Nevertheless, the numerical experiments show that the network still produces a good reconstruction. Therefore, there must be an alternative analysis with less stringent requirement on the width. 

\section*{Acknowledgements} The authors are grateful to the referees for their constructive comments which heave led to an improvement in the quality of the paper.

\appendix
\section{Technical lemma}
\begin{lemma}\label{lem:basic-est}
For $s>0$, and $\tau \in\mathbb{N}$, the following inequality holds
\begin{equation*}
    \sup_{0<\lambda \leq 1}\lambda^{-s}(1-(1-\lambda)^\tau) \leq \tau^s
\end{equation*}
\end{lemma}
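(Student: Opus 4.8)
The plan is to recast the claim in a more tractable form and then split the range of $\lambda$ into two regimes. Multiplying the target inequality through by $\lambda^{s}>0$, the statement is equivalent to showing that $1-(1-\lambda)^\tau\le(\tau\lambda)^{s}$ holds for every $\lambda\in(0,1]$. Since $1-\lambda\in[0,1)$ for such $\lambda$, we have $(1-\lambda)^\tau\in[0,1)$, so the left-hand side always lies in $(0,1]$; this boundedness is what makes the two-regime split natural.

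First I would dispose of the regime $\tau\lambda\ge1$. Here $(\tau\lambda)^{s}\ge1$ because $s>0$, while the left-hand side never exceeds $1$; hence the inequality holds trivially in this regime with no further work.

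The substance lies in the complementary regime $\tau\lambda<1$. The key tool is Bernoulli's inequality $(1-\lambda)^\tau\ge1-\tau\lambda$, valid for $\lambda\le1$ and $\tau\in\mathbb{N}$, which yields $1-(1-\lambda)^\tau\le\tau\lambda$. It then remains to pass from the exponent $1$ to the exponent $s$: writing $t:=\tau\lambda\in(0,1)$, one uses the elementary monotonicity fact that $t\le t^{s}$ for $0<s\le1$ (equivalently, $x\mapsto x^{s}$ is sublinear on the unit interval), so that $1-(1-\lambda)^\tau\le\tau\lambda\le(\tau\lambda)^{s}$, as desired. Dividing back by $\lambda^{s}$ recovers the pointwise bound $\lambda^{-s}(1-(1-\lambda)^\tau)\le\tau^{s}$ uniformly in $\lambda$, and taking the supremum finishes the argument.

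The main obstacle is really this last passage rather than anything analytic: the step $t\le t^{s}$ requires $s\le1$, and this is exactly the range in which the lemma is invoked in Section~\ref{s3} (with $s=\tfrac12$ in the bound on $\mathcal{E}_1$ and $s=\tfrac{q}{2(q+p)}\in(0,\tfrac12)$ in the residual estimates). For $s>1$ the sublinearity reverses and the bound degrades; already for $\tau=1$ the left-hand side equals $\lambda^{1-s}$, so no uniform control is possible in that regime. Consequently no compactness or calculus of the objective is needed: the entire proof rests on Bernoulli's inequality together with the monotonicity of powers on $[0,1]$.
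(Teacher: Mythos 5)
Your proof is correct and follows essentially the same route as the paper's: both split at $\lambda=1/\tau$, dispose of the regime $\lambda>1/\tau$ by the trivial bound $1-(1-\lambda)^\tau\le 1$, and handle $\lambda\le 1/\tau$ via Bernoulli's inequality $1-(1-\lambda)^\tau\le\tau\lambda$ (which the paper obtains from a second-order Taylor expansion whose remainder term is nonpositive) followed by the observation that $\tau\lambda\le(\tau\lambda)^s$ when $\tau\lambda\le 1$. Your remark that the argument requires $s\le 1$ is accurate and applies equally to the paper's own proof, whose step $\tau\lambda^{1-s}\le\tau\,(\tau^{-1})^{1-s}$ likewise needs $1-s\ge 0$; as you note, the lemma as stated for all $s>0$ fails already at $\tau=1$, but only the values $s=\tfrac{1}{2}$ and $s=\tfrac{q}{2(q+p)}<\tfrac{1}{2}$ are ever used in the paper, so the restriction is harmless there.
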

\begin{proof}
Clearly, the function satisfies the desired bound for $\lambda >\frac{1}{\tau}$ so it suffices to bound it on $(0,\frac{1}{\tau}]$. To this end, let $f(\lambda)=(1-\lambda)^\tau$. Since $f$ is twice differentiable over $(0,\frac{1}{\tau})$, the first-order Taylor expansion of $f$ at $\lambda=0$ with the exact Pearno remainder is given by
\begin{align*}
    f(\lambda) & = f(0) + f'(0)\lambda + \frac{f'(\lambda_0)}{2}\lambda^2 
     = 1 - \tau \lambda + \frac{\tau(\tau-1)}{2}(1-\lambda_0)^{\tau-2}\lambda^2,
\end{align*}
with $\lambda_0\in (0,\frac1\tau)$. Thus, 
\begin{align*}
    & \lambda^{-p}(1-(1-\lambda)^\tau)  = \lambda^{-p}\left(1-\left(1 - \tau \lambda + \frac{\tau(\tau-1)}{2}(1-\lambda_0)^{\tau-2}\lambda^2\right) \right)\\
      =& \tau \lambda^{1-p} - \frac{\tau(\tau-1)}{2}(1-\lambda_0)^{\tau-2}\lambda^{2-p} \leq \tau\cdot (\tau^{-1})^{1-p} = \tau^p,
\end{align*}
since $\lambda \leq \tau^{-1}$, and the second term is non-positive.
\end{proof}
\begin{lemma}\label{lem:basic-est2}
For any $r> 0$, and $\tau>0$ with $\tau >r$, the following estimate holds
\begin{equation*}
    \sup_{0\leq \lambda \leq 1} (1-\lambda)^\tau \lambda^r \leq \left(\frac{r}{e}
    \right)^r \tau^{-r}.
\end{equation*}
\end{lemma}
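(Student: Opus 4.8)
The plan is to reduce the constrained maximization over $[0,1]$ to an unconstrained one-variable problem by replacing the factor $(1-\lambda)^\tau$ with an exponential majorant, thereby decoupling it from the endpoint constraint. First I would invoke the elementary inequality $1-\lambda\le e^{-\lambda}$, valid for all $\lambda\in\mathbb{R}$, which yields $(1-\lambda)^\tau\le e^{-\tau\lambda}$ for $\lambda\in[0,1]$ and hence
\[
\sup_{0\le\lambda\le1}(1-\lambda)^\tau\lambda^r\le \sup_{0\le\lambda\le1}e^{-\tau\lambda}\lambda^r\le \sup_{\lambda\ge0}e^{-\tau\lambda}\lambda^r.
\]
The point of this step is that the majorant $h(\lambda):=e^{-\tau\lambda}\lambda^r$ no longer interacts with the constraint $\lambda\le1$, so its maximizer can be located explicitly and the whole problem becomes a textbook optimization.

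Next I would maximize $h$ over $\lambda>0$. Writing $\log h(\lambda)=-\tau\lambda+r\log\lambda$ and differentiating gives $(\log h)'(\lambda)=-\tau+r/\lambda$, which vanishes precisely at $\lambda^\ast=r/\tau$ and is positive on $(0,\lambda^\ast)$ and negative on $(\lambda^\ast,\infty)$; hence $\lambda^\ast$ is the unique global maximizer. Substituting back,
\[
h(\lambda^\ast)=e^{-\tau\cdot r/\tau}\left(\frac{r}{\tau}\right)^r=e^{-r}\left(\frac{r}{\tau}\right)^r=\left(\frac{r}{e}\right)^r\tau^{-r},
\]
which is exactly the claimed bound. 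Here the hypothesis $\tau>r$ is what guarantees $\lambda^\ast=r/\tau<1$, so the unconstrained maximizer already lies in the feasible interval $[0,1]$; this makes the first and last suprema in the displayed chain coincide, confirming that nothing is lost in passing to the exponential majorant.

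I do not expect any genuine obstacle in this argument, since it is a single-variable optimization once $(1-\lambda)^\tau$ has been bounded by $e^{-\tau\lambda}$. The only point requiring a line of care is confirming that the interior critical point $\lambda^\ast$ is truly a maximum rather than checking endpoints, which follows from the sign change of $(\log h)'$, equivalently from the strict concavity of $\log h$. If one preferred to avoid the exponential bound, an alternative would be to optimize $(1-\lambda)^\tau\lambda^r$ directly, whose maximizer is $\lambda^\ast=r/(\tau+r)$ with value $\tau^\tau r^r/(\tau+r)^{\tau+r}$; recovering the stated bound would then reduce to the inequality $(1+x)\log(1+x)\ge x$ for $x=r/\tau>0$, which itself follows by noting that the left side minus the right side vanishes at $x=0$ and has derivative $\log(1+x)\ge0$. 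The exponential route is cleaner, so I would follow it.
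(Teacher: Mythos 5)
Your proposal is correct and follows essentially the same route as the paper's proof: bound $(1-\lambda)^\tau$ by $e^{-\tau\lambda}$ via $1-x\le e^{-x}$, then maximize $e^{-\tau\lambda}\lambda^r$ at $\lambda^\ast=r/\tau$ to obtain $(r/e)^r\tau^{-r}$. The extra observations about $\tau>r$ ensuring feasibility of $\lambda^\ast$ and the alternative direct optimization are fine but not needed.
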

\begin{proof}
It follows directly from the inequality $1-x\leq e^{-x} $ for any $x\in\mathbb{R}$ that
$\sup_{0\leq \lambda \leq 1} (1-\lambda)^\tau \lambda^r \leq e^{-\tau \lambda} \lambda^r.$
Let $f(\lambda)=e^{-\tau \lambda} \lambda^r$. Then $
    f'(\lambda)=-\tau e^{-\tau\lambda } \lambda^r+ e^{-\tau\lambda }r\lambda^{r-1} = (-\tau \lambda + r) e^{-\tau\lambda } \lambda^{r-1}.$ 
Obviously, $\lambda^*=\frac{r}{\tau}$ is the unique zero of $f'$ on $(0,1]$, and since $f(\lambda)$ is nonnegative and $f(0)=0$. it follows that the maximum of $f$ is achieved at $\lambda^*$, and the maximum is $ e^{-r}\left(\frac{r}{\tau} \right)^r$. This completes the proof.
\end{proof}

\bibliographystyle{siamplain}
\bibliography{unn.bib}

\end{document}